\newcommand{\eps}{\varepsilon}
\newcommand{\comment}[1]{}
\newcommand{\R}{\mathbb{R}}
\newcommand{\C}{\mathbb{C}}
\newtheorem{prop}{Proposition}[section]
\newtheorem{thm}{Theorem}
\newtheorem*{thm*}{Theorem}
\newtheorem*{cor*}{Corollary}
\newtheorem{cor}{Corollary}
\newtheorem{lemma}{Lemma}
\theoremstyle{remark}
\newtheorem{rmk}{Remark}[section]
\theoremstyle{definition}
\newtheorem{defn}{Definition}
\numberwithin{equation}{section}
\numberwithin{thm}{section}
\numberwithin{defn}{section}
\numberwithin{prop}{section}
\numberwithin{cor}{section}
\numberwithin{lemma}{section}
\numberwithin{rmk}{section}
\newcommand{\N}{{\mathbb N}}
\newcommand{\coD}{{\overline {\mathcal D}}}
\newcommand{\cA}{{\mathcal A}}
\newcommand{\cB}{{\mathcal B}}
\newcommand{\cC}{{\mathcal C}}
\newcommand{\cD}{{\mathcal D}}
\newcommand{\cE}{{\mathcal E}}
\newcommand{\cF}{{\mathcal F}}
\newcommand{\cK}{{\mathcal K}}
\newcommand{\cL}{{\mathcal L}}
\newcommand{\cN}{{\mathcal N}}
\newcommand{\cO}{{\mathcal O}}
\newcommand{\cP}{{\mathcal P}}
\newcommand{\cQ}{{\mathcal Q}}
\newcommand{\cS}{{\mathcal S}}
\newcommand{\cU}{{\mathcal U}}
\newcommand{\cV}{{\mathcal V}}
\newcommand{\tA}{{\mathtt{A}}}
\newcommand{\tB}{{\mathtt{B}}}
\newcommand{\tC}{{\mathtt{C}}}
\newcommand{\tE}{{\mathtt{E}}}
\newcommand{\tN}{{\mathtt{N}}}
\newcommand{\tR}{{\mathtt{R}}}
\renewcommand{\d}{\partial}
\newcommand{\nnorm}[1]{{\left\vert\kern-0.25ex\left\vert\kern-0.25ex\left\vert #1
	\right\vert\kern-0.25ex\right\vert\kern-0.25ex\right\vert}}
\newcommand{\Pol}{\mathcal{P}(k)}
\newcommand{\dis}[2]{\mathcal{D}_{#1}(#2)}
\newcommand{\disp}[2]{\dot{\mathcal{D}}_{#1}(#2)}
\newcommand{\ucl}{\mathcal{U}(k,\rho,\cK,M)}
\newcommand{\Proj}{\mathbb{P}}
\begin{document}
\author[]{S. Barbieri}
\author[]{L. Niederman}
\address{Université Paris-Saclay and Università degli Studi Roma Tre}
\email{santiago.barbieri@universite-paris-saclay.fr }
\address{Université Paris-Saclay and IMCCE-Observatoire de Paris}
\email{laurent.niederman@universite-paris-saclay.fr }
\title[]{Bernstein-Remez inequality\\ for Nash functions: \\ a complex analytic approach.  }

\begin{abstract}
	Consider an open, bounded set $\Omega\subset \C$, a positive integer $k$ and a compact $\cK\subset \Omega$ of cardinality strictly greater than $k$. We prove that, for any function $f$ which is holomorphic in $\overline \Omega$, and whose graph satisfies $S(z,f(z))=0$ for some polynomial $S\in\C[z,w]$ of degree at most $k$ (hence $f$ is an algebraic function), the quantity $\max_{\overline\Omega}|f|/\max_{\cK}|f|$ is bounded by a constant that only depends on $k$, $\Omega$, $\cK$ but not on $f$ (estimates of this kind are called Bernstein-Remez inequalities). This result has been demonstrated by Roytwarf and Yomdin in case $\cK$ is a real interval, and later by Yomdin for a discrete set $\cK$ of sufficiently high cardinality, by using arguments of real-algebraic and analytic geometry. Here we present and extend a proof due to Nekhoroshev on the existence of a uniform Bernstein-Remez inequality for algebraic functions, which relies on classical theorems of complex analysis. Nekhoroshev's work remained unstudied despite its important consequences in Hamiltonian dynamics and is here presented and extended in a self-contained and pedagogical way, while the original reasonings were rather sketchy. 
\end{abstract}
\maketitle
\section{Introduction}
\subsection{Bernstein-Remez inequality and its application in semi-algebraic geometry}
\

In the literature, one kind of Bernstein's inequality relates the upper bound of a function $f$ - which is analytic in an open bounded domain $\Omega\subset {\mathbb C}$ and continuous in $\overline\Omega$ - with the maximum attained by the same function on a compact subset $\cK\subset \Omega$. In the sequel, this kind of estimate will be called Bernstein-Remez inequality in order to avoid confusion with other sort of Bernstein's inequalities that involve derivatives or primitives (see e.g. \cite{Fefferman_Narasimhan_1995}). The first example of this estimate was given by S. Bernstein in 1913 for polynomial functions:
\begin{thm}{(Bernstein, \cite{Bernstein_1912}, p.14-15)}
	
	Let $P(x)$ be polynomial of one variable with real coefficients and of degree $k\in\N$. Consider the complex domain $\cE$ bounded by the ellipse with foci in $(-1,0), (1,0)$ and whose sum of the semi-axes is equal to $R>0$. Then one has
	$$
	\max_{z\in\cE}|P(z)|\le R^k \max_{x\in [-1,1]}|P(x)|\ .
	$$
\end{thm}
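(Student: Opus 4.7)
The plan is to reduce the inequality to a straightforward application of the maximum modulus principle by pulling $P$ back through the Joukowski conformal map. Concretely, I introduce the substitution $z=\tfrac12(w+w^{-1})$, which sends $\{|w|=\rho\}$ (for $\rho\ge 1$) onto the confocal ellipse with foci $\pm 1$ and semi-axes $\tfrac12(\rho+\rho^{-1})$ and $\tfrac12(\rho-\rho^{-1})$; their sum is exactly $\rho$, so the boundary of $\cE$ is the image of $\{|w|=R\}$, while the segment $[-1,1]$ is the image of the unit circle (traversed twice).

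Next I would change variable once more to $u:=1/w$ and set
\[
g(u)\;:=\;u^{k}P\!\left(\frac{u+u^{-1}}{2}\right).
\]
Expanding $P(z)=\sum_{j=0}^{k}a_{j}z^{j}$ and writing $\tfrac12(u+u^{-1})=\tfrac{1+u^{2}}{2u}$, each summand becomes $a_{j}2^{-j}u^{k-j}(1+u^{2})^{j}$, which is a polynomial in $u$ of degree $k+j\le 2k$. Hence $g$ is a genuine polynomial in $u$ of degree $\le 2k$, and in particular entire.

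On the unit circle $u=e^{i\theta}$ one has $\tfrac12(u+u^{-1})=\cos\theta\in[-1,1]$, so
\[
|g(u)|\;=\;|u|^{k}\,\bigl|P(\cos\theta)\bigr|\;\le\;M,\qquad M:=\max_{x\in[-1,1]}|P(x)|.
\]
The maximum modulus principle applied to the polynomial $g$ on the closed unit disk then gives $|g(u)|\le M$ for all $|u|\le 1$, which in the original variable reads $|P(z(w))|\le M|w|^{k}$ for every $|w|\ge 1$. Specializing to $|w|=R$ yields $|P(z)|\le R^{k}M$ on $\partial\cE$, and a final application of the maximum modulus principle to $P$ on the closed domain $\overline{\cE}$ propagates the estimate to its interior, giving the claimed bound.

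The substantive step is really the algebraic check that multiplication by $w^{k}$ clears all negative powers produced by the Joukowski substitution, so that $g(u)$ is a holomorphic polynomial and not merely a Laurent polynomial; once this is in place the two invocations of the maximum modulus principle (on the $u$-disk for $g$, then on $\overline{\cE}$ for $P$) are routine. No hypothesis on the reality of the coefficients of $P$ is actually used, so the same argument would give the inequality for complex-coefficient polynomials as well.
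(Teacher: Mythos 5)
Your argument is correct, and it is the classical proof via the Joukowski map; the paper itself gives no proof at all but simply cites Bernstein's 1912 monograph, so there is nothing internal to compare against. The key computation — that $u^{k}P\bigl(\tfrac12(u+u^{-1})\bigr)=\sum_{j=0}^{k}a_{j}2^{-j}u^{k-j}(1+u^{2})^{j}$ has no negative powers of $u$ precisely because $j\le k$ — is the crux and you handle it cleanly; the boundary estimate $|g|\le M$ on $|u|=1$ and the two invocations of the maximum principle then close the argument. Two small remarks: first, the final application of the maximum principle to $P$ on $\overline{\cE}$ is actually redundant, since the Joukowski parametrization already sweeps out the whole closed elliptical region as $|w|$ ranges over $[1,R]$, and the inequality $|P(z)|\le M|w|^{k}\le MR^{k}$ holds for every such $w$; second, the statement as written allows $R>0$, but the construction tacitly requires $R\ge 1$ (for $R<1$ the ``ellipse'' is empty or degenerate), a harmless imprecision inherited from the theorem statement rather than your proof. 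Your observation that reality of the coefficients plays no role is also correct.
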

By adopting the notations of \cite{Roytwarf_Yomdin_1998}, we give the following
\begin{defn}[Bernstein's constant and uniform inequality]\label{bernie}
	Let $\Omega\subset\C$ be an open bounded domain, $\cK\subset \Omega$ be a compact and let $f:\Omega\longrightarrow \C$ be holomorphic in $\Omega$ and continuous in $\overline{\Omega}$. The {\it Bernstein's constant of $f$} with respect to $\Omega,\cK$ is the quantity
	$$
	\tB(f,\cK,\Omega):=\max_{\overline\Omega}|f|/\max_{\cK}|f|\ .
	$$
	Any family $\cF$ of holomorphic functions defined in $\Omega$ and continuous in $\overline\Omega$ is said to satisfy a {\it uniform Bernstein-Remez inequality} if there exists $\tC(\cK,\Omega) >0$ such that for all $f\in\cF$
	$$
	\max_{\overline\Omega}|f|\le \tC(\cK,\Omega)\,\max_{\cK}|f|\qquad \text{ or, equivalently, if } \qquad \sup_{f\in\cF}\,\tB(f,\cK,\Omega)\le \tC(\cK,\Omega)\ .
	$$
	
\end{defn}

Finding classes of functions admitting a uniform bound on their Bernstein's constants - and thus satisfying a uniform Bernstein-Remez inequality - is an important issue in the study of several problems of analysis, algebraic geometry and dynamics. In this paper we will establish the existence of a uniform Bernstein-Remez inequality for the following class of analytic-algebraic (Nash) functions: 

\begin{defn}\label{AlgFunct}
	Consider $k\in\mathbb{N}$, $\rho >0$ and denote with $\cD_\rho(0)$ the open complex disk of radius $\rho$ centered at the origin. 

 We indicate with ${\mathcal{V}} (k,\rho )$ is the set of functions that satisfy:
	
	\begin{enumerate}

	\item  $f$ is holomorphic over $\cD_\rho(0)\,$;
	
	\item The graph of $f$ is included in an algebraic curve 
$$\tR_S :=\{(z,w)\in\C^2:S(z,w)=0\}$$ 
associated to  a non-zero polynomial $S\in{\mathbb{C}} [z,w]$ of degree at most $k$, hence 
$$
S(z,f(z))=0\qquad \text{ for } z\in\cD_\rho(0)\ ;
$$
	
	\item The algebraic curve $\tR_S$ is such that $\tR_S\cap \{\cD_\rho(0)\times\C\}$ is the union of at most $k$ elements that can be either vertical lines of the form $\{ (z,w)\in\C^2\ |\ z=z_*\}$ or disjoint graphs of holomorphic functions over $\cD_\rho(0)$.
	
\end{enumerate}

\end{defn}

\medskip 

 The main result developed here is the following:

\begin{thm}\label{Bernstein0}
		
  With the notations of Definition \ref{AlgFunct}, consider a compact set $\cK\subset \cD_\rho(0)$ satisfying:
	\begin{equation}
		0\in\cK\ \text{\it and }\ \text{card }(\cK)> k.
	\end{equation}
	
	The functions of the family ${\mathcal{V}} (k,\rho)$ satisfy an uniform Bernstein-Remez inequality with respect to $\cK$ and to  any open set $\Omega$ such that $\cK\subset\Omega$ and $\overline{\Omega}\subset\cD_\rho(0)$. 
	
	Consequently, there exists a number $\tC=\tC(k,\rho,\cK,\Omega )>0$ such that, for any $f\in{\mathcal{V}} (k,\rho)$, one has:
	$$
	\max_{z\in {\overline \Omega}}|f(z)|\le\tC\, \max_{z\in \cK}|f(z)|\ .
	$$
	
\end{thm}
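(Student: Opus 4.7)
The strategy is a normal-families / compactness argument exploiting the scaling invariance of $\cV(k,\rho)$: multiplying $f$ by a nonzero scalar preserves membership in the class after a corresponding rescaling of $S$ (still of degree at most $k$). We may therefore normalize $\max_\cK|f|=1$ and aim to bound $\max_{\overline{\Omega}}|f|$ uniformly over this subfamily. Arguing by contradiction, suppose $f_n\in\cV(k,\rho)$ satisfies $\max_\cK|f_n|=1$ while $M_n:=\max_{\overline{\Omega}}|f_n|\to+\infty$. Set $g_n:=f_n/M_n$, so $\max_{\overline{\Omega}}|g_n|=1$ and $\max_\cK|g_n|\to 0$; each $g_n$ is annihilated by the rescaled polynomial $\wtS_n(z,w):=S_n(z,M_n w)$, still of degree at most $k$. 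Normalizing $\wtS_n$ to have unit coefficient $\ell^1$-norm and using compactness of the unit sphere in the finite-dimensional space $\C[z,w]_{\le k}$, one extracts a subsequence with $\wtS_n\to\wtS_\infty\not\equiv 0$.

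Pick an intermediate open set $\Omega'$ with $\overline{\Omega}\subset\Omega'\subset\overline{\Omega'}\subset\cD_\rho(0)$. To apply Montel's theorem and extract a limit of $\{g_n\}$, I need a uniform bound $\sup_n\sup_{\overline{\Omega'}}|g_n|<\infty$. Condition (3) of Definition \ref{AlgFunct} is essential here: after factoring out any vertical-line components $(z-z_*)$ from $S_n$, its leading coefficient $a_0^{(n)}(z)$ in $w$ is nowhere vanishing on $\cD_\rho(0)$, otherwise some branch would develop a pole, contradicting (3). A Bernstein--Walsh-type lower bound for polynomials of degree at most $k$ non-vanishing on $\cD_\rho(0)$ gives $|a_0^{(n)}(z)|\ge\delta>0$ on $\overline{\Omega'}$ uniformly after the $\ell^1$-normalization; combined with the monic form of $\wtS_n$ in $w$, standard root-coefficient inequalities yield the desired uniform bound on $|g_n|$. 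Montel then produces a subsequential locally uniform limit $g_n\to g_\infty$ on $\Omega'$; hence $g_\infty$ is holomorphic on $\Omega'$, satisfies $\wtS_\infty(z,g_\infty(z))=0$, vanishes on $\cK$, and attains $|g_\infty(z_*)|=1$ at some $z_*\in\overline{\Omega}$ (obtained from a convergent subsequence of maximizers $z_n\in\overline{\Omega}$ with $|g_n(z_n)|=1$).

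The contradiction then comes from a bounded-valence argument. Since $\wtS_\infty(z,0)$ vanishes at the more than $k$ points of $\cK$ yet is a polynomial of degree at most $k$ in $z$, it must vanish identically, so $w\mid\wtS_\infty(z,w)$. Write $\wtS_\infty=w^r T(z,w)$ with $T(z,0)\not\equiv 0$ and $r\ge 1$. Since $g_\infty\not\equiv 0$ by $|g_\infty(z_*)|=1$, the zero set of $g_\infty$ is discrete, so $T(z,g_\infty(z))=0$ on a dense open subset of $\Omega'$ and hence, by analytic continuation, on all of $\Omega'$. Evaluating at any $z\in\cK$ yields $T(z,0)=0$, forcing the nonzero polynomial $T(\cdot,0)$ of degree at most $k$ to vanish at more than $k$ points, contradicting $T(z,0)\not\equiv 0$. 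The main obstacle I anticipate is precisely the uniform bound of paragraph 2: securing the Bernstein--Walsh lower bound on $a_0^{(n)}$ uniformly across the family under only the $\ell^1$-normalization and the geometric restriction of condition (3) — this is where the classical complex-analytic machinery inherited from Nekhoroshev is deployed.
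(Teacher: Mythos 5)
The proposal reaches the right contradiction at the end, but the uniform local boundedness step (your "paragraph 2") — which you yourself flag as the main obstacle — is genuinely broken as argued, and it is precisely the nontrivial point that the paper devotes Lemma \ref{sesto} to. The difficulty is this: the $\ell^1$-normalization of $\wtS_n$ as a whole does not give a uniform lower bound on the $\ell^1$-mass of the leading coefficient $a_0^{(n)}(z)$ in $w$. Nothing prevents $a_0^{(n)}$ from tending to $0$ (say, $\wtS_n(z,w)=\tfrac1n w^k + w - z$ on $\cD_1(0)$: the degree in $w$ drops in the limit). Condition (3) keeps the leading coefficient nonvanishing on $\cD_\rho(0)$ for each fixed $n$, so the branches are holomorphic for each $n$, but it is compatible with one branch escaping to infinity as $n\to\infty$; this is exactly what a decaying leading coefficient produces. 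Bernstein–Walsh would give $|a_0^{(n)}(z)|\geq\delta_n\cdot\|a_0^{(n)}\|$ on $\overline{\Omega'}$ with $\delta_n$ bounded below, but $\|a_0^{(n)}\|\to 0$ kills the estimate, and the root–coefficient inequality then diverges. So you cannot conclude $\sup_n\sup_{\overline{\Omega'}}|g_n|<\infty$ this way, and without local boundedness Montel does not apply.

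The information you are not exploiting — and which the paper uses to close exactly this gap — is that the specific branch $g_n$ converges pointwise at a fixed point of $\cK$ (indeed $g_n(z^\star)\to 0$ for every $z^\star\in\cK$). A single bounded value does not trivially bootstrap to local boundedness for an arbitrary sequence of holomorphic functions, but here the $g_n$ are roots (in $w$) of the converging polynomials $\wtS_n^z$, so one can combine Theorem \ref{continuous_dependence} (continuous dependence of roots on coefficients, including roots escaping through $\infty$) with arc-connectivity of the compacts $\tE_r$ and the intermediate value theorem: if $|g_{\varphi(n)}|$ were unbounded on a compact, a path from $z^\star$ to the bad point would have to pass through a forbidden intermediate modulus (neither close to any root of the limit polynomial, nor large enough to be an escaped root). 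This is Nekhoroshev's argument, written out in the proof of Lemma \ref{sesto}. Once you import that lemma (and its companion, Lemma \ref{convergenza}, for the convergence to an algebraic limit), the rest of your proposal — the rescaling $\wtS_n(z,w)=S_n(z,M_nw)$, the projective/spherical normalization of the polynomial, the factorization $\wtS_\infty=w^r T$ with $T(\cdot,0)\not\equiv0$, and the contradiction from $T(\cdot,0)$ vanishing at $>k$ points — goes through and gives a tighter single-shot version of the paper's two-stage argument (the paper first reduces to $g(0)=0$, then proves closure of $\cA\cup\{0\}$ and continuity of $\mu_\Omega$, then compactifies projectively). In fact, your rescaling-and-renormalize scheme is essentially the same move the paper makes in proving Theorem \ref{Cauchy}; there, however, the local boundedness is provided by already having Theorem \ref{Bernstein}, which is not available to you here.
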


\begin{rmk} The condition $0\in\cK$ is not mandatory but only for convenience.
\end{rmk}

This theorem has been demonstrated by Roytwarf and Yomdin in \cite{Roytwarf_Yomdin_1998} in the cases where $\cK =[-\rho',\rho']\subset\R$ or $\cK ={\overline \cD}_{\rho'}(0)\subset\C$, and $\Omega=\cD_{\rho''}(0)\subset\C$, with $0<\rho'<\rho''<\rho$. Moreover, the authors obtain quantitative estimates on the upper bound $\tC(k,\rho',\rho'',\cK )$ for the Bernstein's constant and they generalize these results to relevant cases of algebraic families of holomorphic functions. More recently, these estimates have been extended by Yomdin and Friedland to the case of a discrete compact $\cK$ of sufficiently high cardinality in refs. \cite{Yomdin_2011} and \cite{Friedland_Yomdin_2017}, thanks to the introduction of a geometric invariant related to entropy.

\medskip

 Important applications of the Bernstein-Remez inequalities will be presented in the sequel, but first we would like to focus on semi-algebraic geometry. In refs. \cite{Yomdin_2008} and \cite{Yomdin_2015}, Yomdin has shown that - with the exception of a small part - any two-dimensional semi-algebraic set can be covered by the images of a finite number of  real-analytic, algebraic charts of the interval $[-1,1]$. Moreover, thanks to the existence of a Bernstein-Remez inequality for algebraic functions, one has a control on the size of all the derivatives of these charts that depends only on the order of the derivation and on the degrees of the polynomials involved in the definition of the considered semi-algebraic set. This is a partial extension of the theorem (called algebraic lemma) about the ${\mathcal{C}}^k -$reparametrization of semi-algebraic sets due to Yomdin  \cite{Yomdin_1987} and Gromov \cite{Gromov_1987}. The analytic reparametrization result has recently been generalized (see \cite{Binyamini_Novikov_2019} and \cite{Cluckers_Pila_Wilkie_2020}) to higher dimensional sets with more general structures than semi-algebraic, which allows for important applications in arithmetics. 

\subsection{Application to Hamiltonian dynamics and Nekhoroshev theory}
The authors discovered the Bernstein-Remez inequality during the investigation of an important result of Hamiltonian dynamics.
Namely, as it is known, hamiltonian systems that are integrable in the sense of Arnol'd-Liouville (see e.g. \cite{Arnold_1989}) constitute a very relevant, though exceptional, class. Moreover, many important physical systems (especially in Celestial Mechanics) can be modeled by a hamiltonian system which is a small perturbation of an integrable one \cite{Arnold_1989}. During the 1970s, Nekhoroshev\footnote{See \cite{Nekhoroshev_1977}, or \cite{Guzzo_Chierchia_Benettin_2016} for a more modern presentation} proved that if we consider a real-analytic, integrable hamiltonian whose gradient satisfies a suitable, quantitative transversality condition known as {\it steepness} then, for any sufficiently small perturbation of this initial integrable hamiltonian, the solutions of the perturbed system are stable and have a very long time of existence\footnote{The time of stability is exponential (polynomial) in the inverse of the size of the perturbation if the total hamiltonian belongs to the Gevrey (Hölder) class. See \cite{Marco_Sauzin_2002}, \cite{Bounemoura_2010}, \cite{Barbieri_Marco_Massetti_2022}.}. The original definition of steepness given by Nekhoroshev is quite involved. In order to grasp an idea of what this property means, it is worth to mention that a real-analytic function is steep if and only if it has no isolated critical points and if any of its restrictions to any affine proper subspace has only isolated critical points (see \cite{Ilyashenko_1986} and \cite{Niederman_2006}). 

Nekhoroshev also proved in \cite{Nekhoroshev_1973} that the steepness condition is generic, both in measure and topological sense: the Taylor polynomials of sufficiently high order of non steep functions are contained in a  semi-algebraic set having positive codimension in the space of polynomials. Hence, steep functions are characterised by the fact that their Taylor polynomials satisfy suitable algebraic conditions (see \cite{Nekhoroshev_1979} and \cite{Barbieri_2020}). Although these results have been studied and extended for more than fourty years (so that {\it Nekhoroshev Theory} is a classic subject of study in the community of dynamical systems), the proof of the genericity of steepness has remained, up to now, largely unstudied and poorly understood. This is certainly due to the fact that such a demonstration does not involve any arguments of dynamical systems, but combines quantitative reasonings of real-algebraic geometry and complex analysis. It is precisely in those reasonings that the Bernstein-Remez inequality plays a major rôle.

Namely, a crucial step in Nekhoroshev's proof of the genericity of steepness consists in considering, for any fixed polynomial $P\in {\mathbb{C}}[X_1,\ldots ,X_n]$, the semi-algebraic set - called thalweg nowadays (see \cite{Bolte_Daniilidis_Ley_Mazet_2010}) - defined by 
$$
{\mathcal{T}}_P\subset{\mathbb{R}}^n:=\{u\in\R^n|\ \vert\vert\nabla P(u)\vert\vert\leq\vert\vert\nabla P(v)\vert\vert\ \forall v\in\R^n \text{  s.t. } \vert\vert u\vert\vert =\vert\vert v\vert\vert\}\ .
$$

Nekhoroshev shows that, for any open ball ${\mathcal{B}}\subset{\mathbb{R}}^n$, the intersection ${\mathcal{T}}_P\cap{\mathcal{B}}$ contains a real analytic curve ${\mathcal C}$ such that both the distance between the extremities of $\mathcal C$ and the complex analyticity width of its parametrization admit a lower bound that only depends on the degree of the polynomial $P$. More specifically, ${\mathcal C}$ can be parametrized by Nash functions and the existence of a Bernstein-Remez inequality (also proved in \cite{Nekhoroshev_1973}) ensures uniform upper bounds on the derivatives of these charts.  Actually, this result about the thalweg in \cite{Nekhoroshev_1973} is a particular case of a general theorem due to Yomdin \cite{Yomdin_2008} mentioned previously about analytic reparametrizations of semi-algebraic sets. The uniform control on the parametrization of the curve $\cC$ is unavoidable in \cite{Nekhoroshev_1973}, since it ensures that - for a smooth function - steepness is an open property which can be determined by the Taylor expansion at a certain order (we have a "finite-jet" determinacy of steepness). In that way, the study of the genericity of steepness is reduced to a finite-dimensional setting which involves polynomials of bounded order and this is crucial in order to prove the genericity. The study of \cite{Nekhoroshev_1973} will be investigated and specified in a forthcoming paper of the first author. 

 From a more general point of view, the steepness condition is introduced to prevent the abundance of rational vectors on certain sets. In particular, deep applications of the controlled analytic parametrizations of semi-algebraic sets - yielding bounds on the number of integer points in semi-algebraic sets - are given in \cite{Binyamini_Novikov_2019} and \cite{Cluckers_Pila_Wilkie_2020}. Along these lines of ideas, Yomdin-Gromov algebraic lemma with tame parametrizations of semi-algebraic sets (see \cite{Yomdin_1987}, \cite{Gromov_1987}) was used by Bourgain, Goldstein, and Schlag \cite{Bourgain_Goldstein_Schlag_2002} to bound the number of integer points in a two dimensional semi-algebraic set.

In ref. \cite{Nekhoroshev_1973}, Nekhoroshev proves the existence of a Bernstein-Remez inequality for algebraic functions in his specific problem, by exploiting arguments of real algebraic geometry and by making an intensive use of complex analysis. The original statements are difficult to disentangle from the context of the genericity of steepness and the proofs are very sketchy. Roytwarf-Yomdin \cite{Roytwarf_Yomdin_1998} and Yomdin \cite{Yomdin_2011} combine reasonings of real algebraic and analytic geometry together with an important result due to Biernacki \cite{Biernacki_1936} on the Taylor's coefficients of $p$-valent functions\footnote{An analytic function over a disc is said to be $p$-valent if either it is constant or 
each element of ${\rm Im}(f)$ is the image of at most $k$ points. Any algebraic function $f$ satisfying $S(z,f(z))=0$ for some polynomial $S\in\C[z,w]$ of degree $k$ is $k$-valent (Lemma \ref{valency}).}. Nekhoroshev's different strategy of proof is briefly mentioned in \cite{Roytwarf_Yomdin_1998} (p. 848) without quoting \cite{Nekhoroshev_1973}, but so far we have not been able to find any reference that shows it in detail except for the original paper (see \cite{Nekhoroshev_1973}, Lemma 5.1, p.446).

This is our motivation for a short, pedagogical self-contained exposition of Nekhoroshev's proof which relies on standard theorems of complex analysis. Actually, Nekhoroshev \cite{Nekhoroshev_1973} shows the existence of a Bernstein-Remez inequality only in the case in which the compact $\cK$ is a real segment and the considered algebraic functions have a particular form, since this is sufficient for his purposes. Here, we extend this strategy by considering any compact set $\cK$ of high enough cardinality and we get rid of the additional conditions on the form of the algebraic functions.

Nekhoroshev's approach presents two drawbacks. It does not permit to obtain quantitative estimates for the Bernstein constants as in \cite{Roytwarf_Yomdin_1998} and \cite{Yomdin_2011}. Moreover, we were not able to prove a Bernstein-Remez inequality for an algebraic function on its maximal disk of regularity, what is obtained in \cite{Roytwarf_Yomdin_1998} and  is called structural inequality, but only for the maximal disk of regularity of all the algebraic functions associated to the considered polynomial.  Nevertheless, these two points are not relevant for certain applications of the Bernstein-Remez inequality, especially in Nekhoroshev's arguments on the thalweg.

Finally, as it was already known in \cite{Nekhoroshev_1973} and is central in \cite{Roytwarf_Yomdin_1998}, the existence of uniform Bernstein's constants implies uniform bounds on the Taylor coefficients of algebraic functions. In this spirit, we shall also state a result of this kind in Theorem \ref{Cauchy}.

\subsection{Other applications of the Bernstein-Remez inequality}
\

In complex analysis, Bernstein's constants appear when estimating the number of zeros of holomorphic functions. Namely one has the following result, which is a consequence of Jensen's formula:
\begin{thm}{(Tijdeman, \cite{Tijdeman_1971})}
	
	Take three real numbers $R,t>0$, $s>1$ and let $f$ be a holomorphic function of the closed disk $\coD_{(st+s+t)R}(0):=\{z\in\C:|z|\le (st+s+t)R\}$, with $f\not\equiv 0$. Denote with $N(0,R,f)$ the number of zeros of $f$ in $\coD_{R}(0)$. Then
	$$
	N(0,R,f)\le \frac{1}{\log\, s}\log\, \tB(f, \coD_{tR}(0), \coD_{(st+s+t)R}(0))\ .
	$$
\end{thm}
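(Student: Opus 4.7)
The plan is to deduce the bound from Jensen's formula applied at a well-chosen center $z_0\in\coD_{tR}(0)$ at which $|f|$ attains its maximum on that disk. Write $M_r:=\max_{\coD_{tR}(0)}|f|$ and $M_\rho:=\max_{\coD_{(st+s+t)R}(0)}|f|$, so that $\tB=M_\rho/M_r$. Since $f\not\equiv 0$, the identity principle ensures $M_r>0$, and any $z_0$ achieving this maximum satisfies $f(z_0)\neq 0$, which is what Jensen's formula requires at the center.

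Next, I choose the radius of the Jensen disk to be $\rho':=s(t+1)R$. Two elementary geometric inequalities make the argument work: first, since $|z_0|\le tR$, one has $|z_0|+\rho'\le tR+s(t+1)R=(st+s+t)R$, so the closed disk $\coD_{\rho'}(z_0)$ is contained in $\coD_{(st+s+t)R}(0)$, hence $|f|\le M_\rho$ on its boundary; second, for any zero $w\in\coD_R(0)$ of $f$ one has $|w-z_0|\le|w|+|z_0|\le R+tR=(t+1)R=\rho'/s$, so
\[
\log\frac{\rho'}{|w-z_0|}\ge\log s.
\]

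Jensen's formula centered at $z_0$ with radius $\rho'$ then yields
\[
\log|f(z_0)|+\sum_{\substack{w:f(w)=0\\ |w-z_0|<\rho'}}\log\frac{\rho'}{|w-z_0|}=\frac{1}{2\pi}\int_0^{2\pi}\log|f(z_0+\rho'e^{i\theta})|\,d\theta\le \log M_\rho.
\]
The terms in the sum corresponding to zeros outside $\coD_R(0)$ (but still inside $\coD_{\rho'}(z_0)$) are all nonnegative, so they can be dropped, and the remaining $N(0,R,f)$ terms are each bounded below by $\log s$. Rearranging and using $|f(z_0)|=M_r$ gives
\[
N(0,R,f)\,\log s\le \log M_\rho-\log M_r=\log\tB(f,\coD_{tR}(0),\coD_{(st+s+t)R}(0)),
\]
which is the desired inequality.

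No step is really an obstacle: the only thing to verify carefully is the calibration of the radius $\rho'$ so that simultaneously the Jensen disk sits inside the large disk and its ratio to $(t+1)R$ is exactly $s$, which is precisely why the parameter combination $(st+s+t)R$ appears in the hypothesis. The argument uses nothing beyond Jensen's formula and the trivial triangle inequalities above.
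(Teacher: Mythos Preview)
Your proof is correct and follows precisely the route the paper indicates: the paper does not actually give a proof of this theorem but merely cites Tijdeman and remarks that it ``is a consequence of Jensen's formula.'' Your argument---applying Jensen at a point $z_0$ maximizing $|f|$ on $\coD_{tR}(0)$, with radius $\rho'=s(t+1)R$ calibrated so that the Jensen disk sits inside $\coD_{(st+s+t)R}(0)$ while every zero in $\coD_R(0)$ lies within distance $\rho'/s$ of $z_0$---is the standard one and matches the paper's hint exactly.
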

Hence, there exists a uniform bound on the number of zeros for the elements of a family of holomorphic functions admitting a uniform Bernstein-Remez inequality.  It is known \footnote{See e.g. \cite{Roytwarf_Yomdin_1998}, \cite{Francoise_Yomdin_1997} and references therein for a deeper discussion on the subject.} that this fact proves fundamental in investigations around the second part of Hilbert's 16th problem, whose aim is to study the relative positions and to bound the number of limit cycles for any dynamical system defined by a real polynomial vector field of fixed degree $k$ on the plane. 

In a rather different context, uniform Bernstein's constants with respect to the closed complex and real ball have been obtained in \cite{Fefferman_Narasimhan_1995} for functions belonging to a finite span of holomorphic functions of $n$ variables depending real-analytically on a parameter $\lambda\in\cL\subset\R$, with $\cL$ a compact. This result is closely related to investigations in PDEs in which algebraic functions appear when studying the symbol of a pseudo-differential operator (see \cite{Fefferman_Narasimhan_1994} and references therein).

The paper is organized as follows: section 2 contains the mathematical setting and the results, whereas section 3 contains their proofs. Section 4 is devoted to the proof of some technical Lemmas that are used in section 3 and is the "core" of Nekhoroshev's strategy (especially Lemma \ref{panettone}). Finally, we have relegated to the appendices the statements of some classic results on algebraic curves, complex analysis and real-algebraic geometry that are used throughout the paper.
\section{Setting and main results}
\subsection{Setting} For any $r>0$ and any $z_0\in\C$, we denote with $\cD_r(z_0)$ the open complex disk centered at $z_0$ and with $\coD_r(z_0)$ its closure.

$\C[z,w]$ indicates the ring of polynomials of two variables over the complex field. Throughout this paper, we will often identify $\C[z,w]$ with $\C[z][w]$, the ring of complex polynomials in $w$ over the ring of polynomials of the complex variable $z$.

 For $ k\in\N$, we indicate with $\cQ(k)\subset \C[w]$ and $\Pol\subset \C[z,w]$ respectively the subspaces of complex polynomials in one and two variables having degree inferior or equal to $k$. Since $\cQ(k),\Pol$ are finite-dimensional, they can be equipped with an arbitrary norm.

\subsection{Main results}

With the notations of Theorem \ref{Bernstein0}, we consider the following class of functions:

\begin{defn}\label{vu}
	For $k\in\N$ and $\rho >0$, we denote by $\cV_0 (k,\rho )\subset {\mathcal{V}} (k,\rho)$ the subset of those functions $g\in {\mathcal{V}} (k,\rho)$ that satisfy $g(0)=0$.
\end{defn}

The functions of the family $\mathcal{V}_0 (k,\rho )$ belong to the same Bernstein's class w.r.t. the sets $\Omega$ and $\cK$ of Theorem \ref{Bernstein0}. Namely, one has:

\begin{thm}\label{Bernstein}
	Consider an open set $\Omega$ satisfying $\overline{\Omega}\subset \cD_\rho(0)$ and $\cK\subset \Omega$ a compact satisfying $\text{ card }\cK>k$.  There exists a number $\tC_0 =\tC(k,\rho,\cK,\Omega)>0$ that bounds uniformly the Bernstein's constants of the elements of $\cV_0  (k,\rho )$, i.e.: 
	$${\rm for\ any}\  g\in\cV_0  (k,\rho ),\ {\rm one\ has}\ 
	\max_{z\in \overline \Omega}|g(z)|\le\tC_0\, \max_{z\in \cK}|g(z)| .
	$$
\end{thm}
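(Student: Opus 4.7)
My strategy is a compactness argument by contradiction, combining the finite-dimensionality of the polynomial parameter space, Montel's theorem for the branches, and a vanishing lemma for algebraic functions with many zeros.

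Assume that no uniform constant $\tC_0$ exists. Then there is a sequence $g_n\in\cV_0(k,\rho)$ with associated non-zero polynomials $S_n\in\Pol$ of degree $\le k$ satisfying $S_n(z,g_n(z))\equiv 0$ on $\cD_\rho(0)$, and $\tB(g_n,\cK,\Omega)\to\infty$. I would exploit two rescaling freedoms—replacing $g_n$ by $\lambda g_n$ substitutes $S_n(z,w)$ by $S_n(z,w/\lambda)$, still of degree $\le k$, while multiplying $S_n$ by a scalar leaves $g_n$ unchanged—to normalize simultaneously $\max_{\overline\Omega}|g_n|=1$ (so $\max_\cK|g_n|\to 0$) and $\|S_n\|=1$ for a fixed norm on the finite-dimensional space $\Pol$. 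Compactness of the unit sphere of $\Pol$ then yields, along a subsequence, $S_n\to S_*$ with $\|S_*\|=1$, so $S_*\not\equiv 0$. Since $|g_n|\le 1$ on $\overline\Omega\supset\Omega$, Montel's theorem on $\Omega$ produces a further subsequence $g_n\to g_*$ locally uniformly on $\Omega$; passing to the limit in $S_n(z,g_n(z))=0$ gives $S_*(z,g_*(z))=0$ on $\Omega$, so $g_*$ is algebraic with defining polynomial of degree $\le k$; and uniform convergence on the compact set $\cK\subset\Omega$ gives $g_*|_\cK\equiv 0$.

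A short vanishing lemma—proved by induction on $k$—then forces $g_*\equiv 0$: if $h$ is holomorphic on a connected open $U\subset\C$, satisfies $T(z,h(z))=0$ for some non-zero $T\in\Pol$ of degree $\le k$, and vanishes at more than $k$ distinct points, then $h\equiv 0$. Indeed, $T(z,0)$ is a polynomial in $z$ of degree $\le k$ with more than $k$ roots, hence $T(z,0)\equiv 0$, so $w\mid T$; writing $T=wT'$ with $\deg T'\le k-1$ and $T'\not\equiv 0$, one has $T'(z,h(z))=0$ off the discrete set $\{h=0\}$ and therefore on all of $U$ by analytic continuation, and induction closes the argument. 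When $\cK$ has an accumulation point in $\Omega$, the identity principle applied directly to $g_*$ already gives $g_*\equiv 0$.

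The main difficulty is to secure the non-triviality $g_*\not\equiv 0$ required to close the contradiction. The maximum $\max_{\overline\Omega}|g_n|=1$ is attained on $\partial\Omega$ by the maximum modulus principle, and the locally uniform convergence on the open set $\Omega$ gives no a priori control of $g_*$ at such a boundary accumulation point. The natural remedy is to first prove the theorem on a slightly inflated $\Omega^+$ with $\overline\Omega\subset\Omega^+\subset\overline{\Omega^+}\subset\cD_\rho(0)$—which implies the statement for $\Omega$—and to upgrade the normal family argument to show that the rescaled family $\{g_n\}$, under the normalizations $\|S_n\|=1$ and $g_n(0)=0$ together with the degenerating condition $\max_\cK|g_n|\to 0$, is locally uniformly bounded on a neighborhood of $\overline{\Omega^+}$ inside $\cD_\rho(0)$. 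Such a bound cannot hold for arbitrary sequences with $\|S_n\|=1$ alone (e.g.\ $g_n(z)=-nz$ with normalized polynomial $\tilde S_n(z,w)=w/n+z\to z$, a vertical polynomial with no holomorphic branches), so the argument must exploit $\max_\cK|g_n|\to 0$ to rule out this vertical degeneration of $S_*$. This is, I expect, the role of Lemma \ref{panettone} in Section 4: it should combine classical complex analysis (Jensen's formula, Cauchy estimates on Taylor coefficients of $k$-valent functions) with condition (3) of Definition \ref{AlgFunct}—which excludes vertical components of $\tR_S$ among the holomorphic branches—to yield the required uniform local bound. Once this is in hand, Montel applies on a neighborhood of $\overline{\Omega^+}$, the convergence $g_n\to g_*$ is uniform there, a maximizer $z_n\in\partial\Omega^+$ converges to an interior point $z_*$ with $|g_*(z_*)|=1\neq 0$, and the contradiction with $g_*\equiv 0$ closes the proof.
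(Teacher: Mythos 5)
Your compactness-plus-Montel strategy is essentially the one the paper implements, though run by contradiction rather than directly: the paper instead shows that the set $\cA\cup\{0\}$ of normalized defining polynomials is closed in $\Pol$, that $\mu_\Omega(S)=\max_{\overline\Omega}|g_S|$ is continuous on $\cA$, and then passes to the compact projective space $\C\Proj^{m-1}$ to conclude boundedness. Your vanishing lemma is exactly Lemma~\ref{valency}. You also correctly pinpoint the real difficulty: the normalization $\max_{\overline\Omega}|g_n|=1$ combined with locally uniform convergence on $\Omega$ alone cannot prevent the limit from being trivial, and what is missing is a uniform local bound for $\{g_n\}$ on a strictly larger compact inside $\cD_\rho(0)$.

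The gap is that you neither prove this bound nor correctly identify how the paper proves it. Your speculation—Jensen's formula and Biernacki-type Cauchy estimates on Taylor coefficients of $k$-valent functions—is precisely the Roytwarf--Yomdin route that the paper explicitly sets out \emph{not} to follow; Nekhoroshev's argument, which the paper reproduces, is more elementary. The local bound is Lemma~\ref{sesto}, not Lemma~\ref{panettone}. Its proof fixes a non-excluded base point $z^\star\in\tE_r\subset\cD_\rho(0)\backslash\cN_S$ at which, by continuous dependence of the roots of $S_n(z^\star,\cdot)$ on the coefficients (Theorem~\ref{continuous_dependence}), a subsequence of branches $h_n$ converges; it then argues by arc-connectedness of the compact $\tE_r$ and the intermediate value theorem that $t\mapsto|h_n(\gamma(t))|$, along any path $\gamma$ in $\tE_r$ starting at $z^\star$, cannot transition continuously from a value near a bounded root of $S(z^\star,\cdot)$ to a value outside a huge disk, because continuous dependence of roots excludes the intermediate range. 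This is a dichotomy argument, not a valency argument. Once the local bound holds on $\cD_\rho(0)\backslash\cN_S$, the global holomorphy of each $g_n$ over $\cD_\rho(0)$ (since $S_n\in\cB$) together with the maximum principle upgrades it to a local bound on all of $\cD_\rho(0)$, and Montel closes the argument as you describe. Lemma~\ref{panettone}, which you identify as the missing piece, actually plays a different role: it shows via Hurwitz's theorem and Riemann's removable singularity theorem that $\cB\cup\{0\}$ is closed, so that the limit polynomial has a globally holomorphic, single-valued branch over $\cD_\rho(0)$—needed for the paper's direct argument to conclude $S_*\in\cA$, but not logically prior to the local bound you are missing.
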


\begin{rmk} The hypothesis $0\in\cK$ of Theorem \ref{Bernstein0} is unnecessary in Theorem \ref{Bernstein}.
\end{rmk}

 Theorem \ref{Bernstein0} is a consequence of Theorem \ref{Bernstein} since, for any $f\in\cV(k,\rho)$, the function $g(z):=f(z)-f(0)$ belongs to the class $\cV_0(k,\rho)$ and Theorem \ref{Bernstein} ensures:
\begin{align*}
	\begin{split}
		\max_{\overline \Omega}|f|\le & |f(0)|+\max_{{\overline \Omega}}|g|\le |f(0)|+\tC_0\,\max_{\cK}|g|\\
		\le & |f(0)|+\tC_0|f(0)|+\tC_0\max_{\cK}|f|= (1+2\tC_0)\max_\cK|f| :=\tC\,\max_{ \cK}|f|
	\end{split}
\end{align*}
where the last estimate comes from the hypothesis $0\in\cK$. 

 This concludes the proof of Theorem \ref{Bernstein0}

\medskip

Theorem \ref{Bernstein} is also the cornerstone which allows to prove an uniform upper bound on the Taylor coefficients of functions in $\cV_0(k,\rho )$. More specifically, we introduce the following class of bounded algebraic functions:

\begin{defn}\label{Def2.3}
	With the previous notations, for any $M\geq 0$ and any compact $\cK\subset\cD_{\rho}(0)$, we denote with $\cU(k,\rho,\cK, M)$ the subset of those functions $g\in\cV_0(k,\rho )$ that satisfy $\max_{\cK}|g|= M$.

 Hence, we have $\cV_0(k,\rho )=\cup_{M\geq 0}\cU(k,\rho,\cK, M)$.
\end{defn}

The functions in $ \ucl$ fulfill a generalized uniform Cauchy inequality, namely
\begin{thm}\label{Cauchy}

 Under the additional assumption $\text{ card }\cK>k$, there exists a constant $K=K(k,\rho,\cK)$ such that, for any function $g\in \ucl$, the coefficients of the Taylor series
	\begin{equation}\label{Taylor}
		g(z)=\sum_{j=1}^{+\infty}a_jz^j\ \ ({\it with}\ g(0)=0)
	\end{equation}
	satisfy the uniform inequality
	\begin{enumerate}
		\item $$|a_j|\le K(k,\rho,\cK) M\qquad \text{  if } \rho>1\ ;$$
		\item for any number $m>1$
		$$
		|a_j|\le K(k,m,\cK) M\left(\frac{m}{\rho}\right)^j\qquad \text{  if } \rho\le 1\ .
		$$
	\end{enumerate}
	
\end{thm}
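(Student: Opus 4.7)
The plan is to deduce the bound on the Taylor coefficients $a_j$ as a direct corollary of Theorem \ref{Bernstein}, combined with the classical Cauchy estimates on circles contained in the disk of analyticity. Since $\cK$ is a compact subset of $\cD_\rho(0)$, the quantity $\rho_* := \max_{z\in\cK}|z|$ satisfies $\rho_* < \rho$. For an auxiliary radius $\rho' \in (\rho_*, \rho)$ to be fixed in each case, set $\Omega := \cD_{\rho'}(0)$, so that $\cK \subset \Omega$ and $\overline{\Omega} \subset \cD_\rho(0)$. Since $\cU(k,\rho,\cK,M) \subset \cV_0(k,\rho)$, Theorem \ref{Bernstein} provides a constant $\tC_0 = \tC_0(k,\rho,\cK,\Omega)$ such that, for every $g \in \cU(k,\rho,\cK,M)$,
$$
\max_{\overline{\Omega}}|g| \le \tC_0\,\max_{\cK}|g| = \tC_0\,M.
$$
For any $r \in (0,\rho')$, the classical Cauchy inequality applied to the expansion \eqref{Taylor} on the circle $|z|=r$ then gives $|a_j| \le \tC_0\,M / r^j$ for all $j \ge 1$.

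To settle case $(1)$, when $\rho > 1$, I would choose $\rho' \in (\max(\rho_*,1),\rho)$ (for instance $\rho' = \tfrac{1}{2}(\max(\rho_*,1) + \rho)$) and apply the preceding estimate with $r = \rho'$. Since $\rho' > 1$ and $j \ge 1$, one has $(\rho')^j \ge 1$, hence $|a_j| \le \tC_0\,M$, with the constant depending only on $k, \rho, \cK$ (the auxiliary radius $\rho'$ being itself a function of these data). This furnishes the desired constant $K(k,\rho,\cK)$.

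For case $(2)$, when $\rho \le 1$ and a number $m > 1$ is prescribed, I would instead pick $\rho' \in (\max(\rho_*, \rho/m),\rho)$, which is a non-empty interval since both $\rho_*$ and $\rho/m$ are strictly less than $\rho$; applying Theorem \ref{Bernstein} as above and then invoking Cauchy's inequality on the smaller circle of radius $r = \rho/m < \rho'$ produces
$$
|a_j| \le \tC_0\,M\,\bigl(m/\rho\bigr)^j,
$$
with $\tC_0$ now depending on $m$ through the choice of $\rho'$, yielding the sought-after constant $K(k,m,\cK)$ (also implicitly depending on $\rho$). The deduction is essentially immediate from Theorem \ref{Bernstein}; the only point requiring some care is to choose the auxiliary disk $\Omega$ so as to simultaneously contain $\cK$ and the Cauchy circle of radius $r$, yet remain compactly contained in the disk of analyticity $\cD_\rho(0)$. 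Any genuine obstacle therefore sits in Theorem \ref{Bernstein} itself, not in this corollary.
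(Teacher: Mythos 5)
Your proof is correct and takes a genuinely different, simpler route than the paper's. The paper proves Theorem \ref{Cauchy} by contradiction: it fixes the Cauchy radius at $1$, extracts from a hypothetical unbounded sequence $g_n$ the renormalized functions $\bar g_n := g_n/\max_{|z|=1}|g_n|$, applies Theorem \ref{Bernstein} with $\coD_1(0)$ playing the role of the compact set to get local boundedness, extracts a Montel limit $\bar g$, and then uses Lemmas \ref{sequence} and \ref{valency} to conclude that $\bar g$ is $k$-valent yet vanishes at the $(>k)$ points of $\cK$ while $\max_{|z|=1}|\bar g|=1$, a contradiction. You bypass the entire compactness machinery: you choose the auxiliary disk $\Omega=\cD_{\rho'}(0)$ large enough to contain both $\cK$ and the Cauchy circle, invoke Theorem \ref{Bernstein} directly with the original compact set $\cK$ to obtain $\max_{\overline\Omega}|g|\le \tC_0\,M$, and feed this into the Cauchy integral estimate. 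This avoids Montel's theorem, the valency lemma, and Lemma \ref{sequence} altogether, and it is both shorter and more transparent about where the constant comes from. The rescaling argument for $\rho\le 1$ is parallel in both treatments; you correctly note that your constant in case $(2)$ depends implicitly on $\rho$, and in fact the paper's proof has the same hidden dependence through the rescaled set $\cK_m=(m/\rho)\cK$, so the statement's notation $K(k,m,\cK)$ is a minor abuse in both arguments. One small stylistic point: in case $(1)$ you announce the Cauchy bound for $r\in(0,\rho')$ and then use it at the endpoint $r=\rho'$; this is harmless since $g$ is holomorphic on an open neighbourhood of $\overline{\cD_{\rho'}(0)}$, but taking any $r\in(1,\rho')$ yields the same conclusion without the endpoint issue.
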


\begin{rmk}	This result is stated and used in \cite{Nekhoroshev_1973} in the particular case where $\rho>1$, $\cK=[0,\lambda]\subset\R$, $M(\lambda)=\lambda$ and $\lambda>0$. The equivalence between a uniform bound on the growth of the Taylor coefficients and the Bernstein-Remez inequality is central in \cite{Roytwarf_Yomdin_1998}.
\end{rmk}

Theorems \ref{Bernstein} and \ref{Cauchy} will be proved in the next section.
\section{Proof of the main results}
 We first need the following standard lemma:
\begin{lemma}\label{valency}
	With the notations of the previous section, an analytic-algebraic (Nash) function $f$, associated to a polynomial $S\in \mathbb{C}[z,w]$ of degree $k\in \N$, is $k$-valent: that is, if $f$ is not constant then each element of ${\rm Im}(f)$ is the image of at most $k$ points.
Consequently, if $f$ is not identically zero, then $f$ cannot be identically zero over any set $\cK$ included in the domain of definition of $f$ such that ${\rm Card}(\cK)>k$.
\end{lemma}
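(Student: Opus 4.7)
The plan is to replace the given defining polynomial by one of minimal total degree and then argue by the identity principle for holomorphic functions. First, I would pick a non-zero $\widetilde{S}\in\C[z,w]$ of minimal total degree among those polynomials of degree at most $k$ satisfying $\widetilde{S}(z,f(z))\equiv 0$ on $\cD_\rho(0)$; such a polynomial exists since $S$ itself belongs to this family.

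Second, fix any $w_0\in\mathrm{Im}(f)$ and set $P_{w_0}(z):=\widetilde{S}(z,w_0)\in\C[z]$, a univariate polynomial of degree at most $k$. The crux of the argument is to show that $P_{w_0}\not\equiv 0$ under the assumption that $f$ is non-constant. To achieve this, I would assume by contradiction $P_{w_0}\equiv 0$, view $\widetilde{S}$ as an element of $\C[z][w]$, and perform Euclidean division by $(w-w_0)$; since the remainder is exactly $P_{w_0}(z)$, this produces a factorization $\widetilde{S}(z,w)=(w-w_0)\,Q(z,w)$ with $Q\in\C[z,w]$ non-zero and of total degree strictly less than $\deg\widetilde{S}$. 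Substituting $w=f(z)$ then yields $(f(z)-w_0)\,Q(z,f(z))\equiv 0$ on $\cD_\rho(0)$. Since $f$ is non-constant and $\cD_\rho(0)$ is connected, $f-w_0$ is a non-trivial holomorphic function whose zero set is discrete; on the complement the holomorphic factor $Q(z,f(z))$ must vanish, and by the identity principle $Q(z,f(z))\equiv 0$ on the whole disk, contradicting the minimality of $\widetilde{S}$.

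The $k$-valency follows immediately, since every $z\in f^{-1}(w_0)$ is a root of the non-trivial polynomial $P_{w_0}$ of degree at most $k$. The final consequence is obtained by specialising $w_0=0$: a non-identically-zero $f\in\cV(k,\rho)$ is either a non-zero constant, in which case it has no zeros at all, or non-constant, in which case its zero set contains at most $k$ elements by $k$-valency; in both cases $f$ cannot vanish identically on a set $\cK$ of cardinality strictly greater than $k$. I anticipate the only delicate point to be the invocation of the identity principle to cancel the factor $(f-w_0)$, which is precisely where the hypothesis that $f$ is non-constant (and the holomorphy assumption from Definition \ref{AlgFunct}) comes into play; the rest of the argument is purely algebraic manipulation of polynomials.
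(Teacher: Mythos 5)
Your proof is correct and follows essentially the same approach as the paper: both factor $(w-w_0)$ out of the annihilating polynomial and use the identity principle on the non-trivial holomorphic factor $f-w_0$ to conclude that the quotient still annihilates the graph of $f$, arriving at a contradiction with an extremality choice. You package the extremality as a minimal-degree annihilating polynomial chosen once and for all, whereas the paper extracts the maximal power of $(w-w_0)$ from the given $S$; the mechanism and the role of the continuity/identity-principle step are identical.
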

\begin{proof}  Assume, by absurd, that $f$ is non-constant and that there exists $w_0\in{\rm Im}(f)$ which is the image of at least $p>k$ points.  The polynomial $S^{w_0} (z):=S(z,w_0)$ would admit $p>k$ roots while ${\deg}(S^{w_0})\leq k$ by hypothesis. The Fundamental Theorem of Algebra ensures that $S^{w_0}$ must be identically zero and one has the factorization $S(z,w)=(w-w_0)^\alpha{\hat S}(z,w)$, 
	where $\alpha\in\{1,...,k\}$, while $\hat S$ cannot be divided by $(w-w_0)$ in $\C[z,w]$. Since $f$ is analytic and not constant, then the preimage $f^{-1}(\{ w_0\})$ is a discrete set and the graph of $f$ must fulfill ${\hat S}(z,f(z))=0$ out of $f^{-1}(\{ w_0\})$. By continuity, one has $\hat S(z,f(z))=0$ on the whole domain of definition of $f$ since $f^{-1}(\{ w_0\})$ is discrete. But $\deg \hat S^{w_0}\le k$, with $\hat S^{w_0}(z):=\hat S(z,w_0)$, and $\hat S^{w_0}$ admits more than $k$ roots, hence the previous argument ensures that $\hat S$ can be divided by $(w-w_0)$, in contradiction with the construction.

 Moreover, if $f\not\equiv 0$, then $0$ admits at most $k$ inverse images by $f$, and $f$ cannot be identically null over any set $\cK$ included in the domain of definition of $f$ satisfying $\text{card }\cK>k$.
\end{proof}

 Consequently, without any loss of generality, in Theorem \ref{Bernstein} we can assume $g\in\cU(k,\rho,\cK,1)$ according to Definition \ref{Def2.3} (hence $g\in\cV_0(k,\rho )$ and $\max_\cK|g|= 1$) since, if this is not the case, it suffices to consider $g/\max_\cK|g|$.

Then, we define the following set:
\begin{defn}\label{branch}
	$\mathcal A:=\mathcal A(\mathcal K,k,\rho)$ denotes the set of those polynomials $S\in \Pol\backslash\{0\}$ whose algebraic curve $\tR_S :=\{(z,w)\in\C^2:S(z,w)=0\}$ satisfies
	\begin{enumerate}
	\item 	$\tR_S\cap\{ \cD_\rho(0)\times\C\}$ is the union of at most $k$ elements that can be either vertical lines of the form $\{ (z,w)\in\C^2\ |\ z=z_*\}$ or disjoint graphs of holomorphic functions over $\cD_\rho(0)$\,;
	\item 	there exists $g_S\in\mathcal U(k,\rho,\mathcal K,1)$ whose graph is contained in $\tR_S\cap\{ \cD_\rho(0)\times\C\}$. 
	\end{enumerate}
\begin{rmk}
	For any $S\in\cA$, the function $g_S$ is unique since the graphs contained in the algebraic curve of $S$ are disjoint over $\cD_\rho(0)$ and the value $g_S(0)=0$ is fixed. 
\end{rmk}	 
	
\end{defn}

The central property in the proof of Theorem \ref{Bernstein} is the following
\begin{lemma}\label{chiusura}
	$\cA\cup\{0\}$ is closed in $\Pol$ and, for any open set $\Omega$ satisfying $ \cK\subset\Omega$, $\overline\Omega\subset  \cD_\rho(0)$, the function
	$$
	\mu_{\Omega}:\cA\longrightarrow \R \qquad S\longmapsto \max_{\overline \Omega} |g_S|
	$$
	is continuous.
\end{lemma}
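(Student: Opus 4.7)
The plan is to prove both assertions—closedness of $\cA\cup\{0\}$ in $\Pol$ and continuity of $\mu_\Omega$—simultaneously, by studying what happens to the distinguished function $g_{S_n}\in\cU(k,\rho,\cK,1)$ when $S_n\to S$ in $\Pol$ with each $S_n\in\cA$ and $S\neq 0$.

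First I would prove that $\{g_{S_n}\}$ forms a normal family on $\cD_\rho(0)$. Each $g_{S_n}(z)$ is a root of the polynomial $w\mapsto S_n(z,w)$, whose coefficients vary continuously with $S_n$, and each $g_{S_n}$ is holomorphic on all of $\cD_\rho(0)$, so the selected root never escapes to infinity. Together with $g_{S_n}(0)=0$ and $\max_\cK|g_{S_n}|=1$, these inputs should force a local uniform bound on compact subsets of $\cD_\rho(0)$, so by Montel's theorem some subsequence $g_{S_{n_j}}$ converges locally uniformly to a holomorphic limit $g$. Passing to the limit in $S_{n_j}(z,g_{S_{n_j}}(z))=0$ then yields $S(z,g(z))=0$, while uniform convergence on $\cK\cup\{0\}$ gives $g(0)=0$ and $\max_\cK|g|=1$. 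In particular $g\not\equiv 0$, and by Lemma \ref{valency} its graph is a genuine branch of $\tR_S$.

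To conclude $S\in\cA$ it remains to verify condition (1) of Definition \ref{branch} for $S$, namely that $\tR_S\cap(\cD_\rho(0)\times\C)$ decomposes into at most $k$ disjoint vertical lines and holomorphic graphs on $\cD_\rho(0)$. Viewing $S$ as a polynomial in $w$ over $\C[z]$, the discriminant $\Delta_S\in\C[z]$ detects the branch locus, and one must show that the analogous discriminantal structure of the $S_n$—which is compatible with the decomposition by hypothesis—persists under the limit. Combining the convergence $\Delta_{S_n}\to\Delta_S$ with the normality of $\{g_{S_n}\}$ and the classical theorems on algebraic curves collected in the appendices should allow the decomposition to be transferred to $S$. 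Once closedness is in hand, continuity of $\mu_\Omega$ follows by a standard subsequence argument: for $S_n\to S$ in $\cA$, every subsequence admits a further subsequence along which $g_{S_{n_j}}\to g_S$ uniformly on $\overline\Omega\subset\cD_\rho(0)$—uniqueness of $g_S$ being guaranteed by Definition \ref{branch}—so $\mu_\Omega(S_{n_j})\to\mu_\Omega(S)$ and hence the full sequence converges.

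The main obstacle is establishing the uniform local bound on $\{g_{S_n}\}$ in the first step: nothing in the definition of $\cA$ manifestly prevents the leading coefficient of $S_n(z,\cdot)$ from degenerating in the limit, which could in principle let $g_{S_n}$ blow up somewhere in $\cD_\rho(0)$ before reaching $S$. Overcoming this will probably require a finer use of the structural hypothesis—that $g_{S_n}$ is a globally defined graph over $\cD_\rho(0)$ coexisting with $k-1$ other branches or vertical lines—combined with the normalization $\max_\cK|g_{S_n}|=1$, presumably through the technical lemmas developed in Section 4, whose role in the paper is precisely to provide such a priori controls on algebraic functions.
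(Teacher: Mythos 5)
You have correctly identified the overall architecture — extract a converging subsequence of the distinguished branches $g_{S_n}$, pass to the limit in $S_n(z,g_{S_n}(z))=0$, verify $S\in\cA$, and obtain continuity by a subsequence argument — and you have honestly flagged the real obstruction: there is no a priori reason why the family $\{g_{S_n}\}$ should be locally bounded on $\cD_\rho(0)$, because the branches of $S_n$ could degenerate (the leading coefficient in $w$ may vanish in the limit, producing poles or vertical lines). But the proposal stops short of resolving this, and the resolution is where all the substance lies. The paper's route is structurally different from what you sketch.

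Rather than attempting to prove normality of $\{g_{S_n}\}$ on $\cD_\rho(0)$ directly, the paper first proves Lemma \ref{panettone}: the \emph{larger} set $\cB\cup\{0\}$ is closed in $\Pol$. This is done by factoring each $S_n$ as $q_n(z)\,\overline\cS_n(z,w)$ with $\|q_n\|=1$, extracting a convergent subsequence of $q_n$, and then showing — via Lemma \ref{accumulazione}, a path-lifting plus intermediate-value argument in Lemma \ref{sesto}, Montel's theorem in Lemma \ref{convergenza}, and two crucial applications of Hurwitz's theorem — that the limit polynomial $\overline\cS$ has no excluded points in $\cD_\rho(0)$: no pole can form (Hurwitz applied to $1/h_{n_j}$) and no two branches can collide (Hurwitz applied to $h_{n_j}-\ell_{n_j}$). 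This is precisely the degeneracy you worried about, handled by a tool you did not invoke. Crucially, the local boundedness in Lemma \ref{sesto} is only established on compacts of $\cD_\rho(0)\setminus\cN_S$, not on all of $\cD_\rho(0)$; only \emph{after} knowing $S\in\cB$ (so the limit branch extends holomorphically past $\cN_S$) does the Maximum Principle upgrade the convergence to locally uniform on all of $\cD_\rho(0)$. Your discriminant heuristic is a weaker proxy that tracks ramification but not the full package of poles, vertical lines, and intersections that the paper controls through the factorization \eqref{decompose} and the explicit list of excluded points in Definition \ref{esclusi}.

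One more point you use without justification: you need an anchor point at which the values $g_{S_n}(z^\star)$ stay bounded, in order to pin down \emph{which} limit branch you land on. The paper gets this from the cardinality hypothesis $\text{card}\,\cK>k$: since $S\in\cB$ forces $\cN_S$ to consist only of roots of $q$, hence $\text{card}\,\cN_S\le k$, there must exist $z^\star\in\cK\setminus\cN_S$, and there $|g_{S_n}(z^\star)|\le 1$ by normalization. This is the one place where $\text{card}\,\cK>k$ is actually used in Lemma \ref{chiusura}, and it is absent from your sketch. Your final continuity argument via subsequences is correct and essentially agrees with the paper's.
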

We shall relegate the proof of Lemma \ref{chiusura} to the next section and we shall exploit its statement here to prove Theorems \ref{Bernstein} and \ref{Cauchy}.
\begin{proof}{\it (Theorem \ref{Bernstein})}

 By Definitions \ref{vu}, \ref{Def2.3} and \ref{branch}, we can associate to any  $g\in\cU(k,\rho,\cK,1)$ a polynomial $S\in\cA$ such that $g=g_S$. A standard combinatorial computation yields that $\Pol$ is isomorphic to $\C^m$, with $m=(k+1)(k+2)/2$.
	It is also easy to see that for any polynomial $S\in\cA$ and for any $c\in\C\backslash\{0\}$ the polynomial $S'=cS$ belongs to $\cA$ and $g_{S'}\equiv g_S$, so that it makes sense to pass to the projective space
	$$
	\C\Proj^{m-1}:=\{\C^m\backslash\{0\} \}/ \{\C\backslash \{0\}\}\quad ,
	\qquad
	\pi: \C^m\backslash\{0\}\longrightarrow \C\Proj^{m-1}\ ,
	$$
	where $\pi$ denotes the standard canonical projection inducing the quotient topology in $\C\Proj^{m-1}$. Moreover, for any open set $\Omega$ satisfying $\cK\subset \Omega $, $ \overline \Omega \subset  \cD_\rho(0)$, the function
	$$
	\hat\mu_{\Omega}:\pi(\cA)\longrightarrow \R\quad ,\qquad \pi(S)\longmapsto \max_{\overline \Omega} |g_S|
	$$
	is well defined and continuous by Lemma \ref{chiusura}. To prove the latter claim, take a closed set $\cE\subset\R$ and consider its inverse image $\hat \mu^{-1}_{\Omega}(\cE)=\pi(\mu^{-1}_{\Omega}(\cE))$. Since $\mu_{\Omega}$ is continuous, $\mu^{-1}_{\Omega}(\cE)$ is closed in $\cA$ for the induced topology. By Lemma \ref{chiusura}, $\cA\cup\{0\}$ is closed in $\C^m$, so that $\cA$ is closed in $\C^m\backslash\{0\}$. Hence, $\mu^{-1}_{\Omega}(\cE)$ is closed in $\C^{m}\backslash\{0\}$. Since $\mu_\Omega$ is invariant if its argument is multiplied by a complex non-zero constant, $\mu^{-1}_\Omega(\cE)$ is saturated and one has $\pi^{-1}(\pi(\mu^{-1}_\Omega(\cE)))=\mu^{-1}_\Omega(\cE)$. Consequently, the set  $\pi(\mu^{-1}_{\Omega}(\cE))=\hat \mu^{-1}_{\Omega}(\cE)$ is closed for the quotient topology because its inverse image w.r.t. $\pi$ is closed. This proves the continuity of $\hat\mu_\Omega$.
	
	Moreover, since $\cA$ is closed and saturated in $\C^m\backslash\{0\}$, $\pi(\cA)$ is closed in $\C\Proj^{m-1}$ and the compacity of $\C\Proj^{m-1}$ ensures that $\pi(\cA)$ is compact. By continuity of $\hat\mu_{\Omega}$, the image $\hat\mu_{\Omega}(\pi(\cA))$ is a compact set of $\R$, hence bounded. Therefore, there exists a constant $\tC(k,\rho,\cK,\Omega)$ such that for any $g\in \cU(k,\rho,\cK,1)$ one has $$
	\max_{\overline \Omega}|g|=\displaystyle \frac{\max_{\overline \Omega}|g|}{\max_{\cK}|g|}\le \tC(k,\rho,\cK,\Omega)
	$$
	and this concludes the proof.
\end{proof}

\begin{proof}{\it (Theorem \ref{Cauchy})}
 
 Since $g$ is non identically zero over $\cK$ (see  Lemma \ref{valency}), we can consider the function $g/M$ and we are reduced to the case $M=1$. 
	
 We start by proving the statement for $\rho>1$. 

	By absurd, suppose that Theorem \ref{Cauchy} is false. By choosing the analyticity width $\varrho=1<\rho$, the standard Cauchy estimates at the origin yield $|a_j|\le\max_{|z|=1}|g(z)|$ for any $j\in\N$ and, since the thesis is false, there exists a sequence $\{g_n\}_{n\in\N}$, with $g_n\in \cU(k,\rho,\cK,1)$ for all $n\in\N$, satisfying
	$$
	\lim_{n\longrightarrow+\infty}\left(\max_{|z|=1}|g_n(z)|\right)=+\infty \ .
	$$
	
	By the maximum principle over $\overline{\mathcal D}_1(0)$, all the functions in the sequence
	$$
	\bar{g}_n(z):=\frac{g_n(z)}{\max_{|z|=1}|g_n(z)|}\qquad n\in\N
	$$
	belong to $\mathcal U(k,\rho,\overline{\mathcal D}_1(0), 1)\subset \cV_0(k,\rho )$. Hence, by Theorem \ref{Bernstein}, for any open set $\Omega$ satisfying $\overline{\mathcal D}_1(0)\subset  \Omega$, $\overline\Omega\subset \cD_\rho(0)$ and for any $n\in \N$, the Bernstein-Remez inequality
	$$
	\max_{z\in \overline\Omega}|\bar{g}_n(z)|\le\tC_1\, \max_{|z|\leq 1}|\bar{g}_n(z)|=\tC_1\, \max_{|z|=1}|\bar{g}_n(z)|=\tC_1
	$$
	holds for some constant $\tC_1=\tC_1(k,\rho,\overline{\mathcal D}_1(0),\Omega)$.

 Since an arbitrary compact subset of $\cD_\rho(0)$ can be included in a bounded open set $\Omega\in \cD_{\rho}(0)$ such that $\coD_1(0)\subset \Omega$ and $\overline\Omega\subset \cD_{\rho}(0)$, we see that $\{\bar{g}_n\}_{n\in\N}$ is a locally bounded sequence. Hence, by Montel's Theorem (Corollary \ref{montel}), one can extract a subsequence $\{\bar{g}_{n_j}\}_{j\in\N}$ which converges locally uniformly in $\cD_\rho(0)$ to a holomorphic function $\bar{g}$. 
	
	 With Lemmas \ref{sequence} and \ref{valency}, $\bar g$ is algebraic and $k$-valent in $\cD_{\rho}(0)$.
 Now, for any fixed $z^*\in \cK$, one has the pointwise limit
	$$
	\bar g(z^*)\! =\! \! \lim_{j\longrightarrow+\infty}\bar{g}_{n_j}(z^*)\! \le\! \!  \lim_{j\longrightarrow+\infty} \frac{\max_{z\in\cK}|g_{n_j}(z)|}{\max_{|z|=1}|g_{n_j}(z)|}\! =\! \! \lim_{j\longrightarrow+\infty} \frac{1}{\max_{|z|=1}|g_{n_j}(z)|}=0.
	$$
 On the one hand, since $\text{ card }\cK>k$, the previous expression implies that $\bar g$ has at least $k+1$ zeros in $\cD_\rho(0)$. On the other hand, we have shown that $\bar g$ is $k$-valent, so the number of its zeros must be bounded by $k$. We have derived a contradiction and therefore the statement is true for $\rho>1$. 
	
\medskip

 In case $\rho\le 1$, for any fixed $m>1$ one considers the function
	$$ g_{m}(z):=g\left(\frac{\rho}{m}z\right):=\sum_{j=1}^{+\infty}c_jz^j=\sum_{j=1}^{+\infty}a_j\left(\frac{\rho}{m} z\right)^j
	$$
	analytic in $ \cD_m(0)$ and belonging to $\cU(k,m,\cK_m,1)$, where 
	$$
	\cK_m:=\{z\in\cD_m(0):\frac{\rho}{m}z\in \cK\}
	$$ 
	fulfills $\text{ card}\, \cK_m>k$ since $\cK$ does.
	
	Since the convergence radius of $g_{m}$ is $m>1$, the statement holds for this function and there exists a constant $K(k,m,\cK)$ such that
	$$
	|c_j|\le K(k,m,\cK) \qquad \forall \ j\in \N,
	$$
	which implies
	$$
	|a_j|\le K(k,m,\cK) \left(\frac{m}{\rho}\right)^j\ .
	$$
	This concludes the proof.
\end{proof}

\section{Technical lemmas}\label{technical_lemmas}

The aim of this section is to prove Lemma \ref{chiusura}. We first recall a few classical points.

The algebraic curve of a polynomial $S\in\C[z,w]$ is the zero-set
$$
\tR_S:=\{(z,w)\in\C^2:S(z,w)=0\}\ .
$$
and one has the following standard result
	\begin{lemma}\label{Riemann leaves}
		For any integer $k\ge 1$ and for any polynomial $S\in\Pol$, there exists a set $\cN_S\subset \C$ (defined explicitly in appendix A, see \ref{esclusi}) satisfying $\text{card }\cN_S\le \tN_k$ - where $\tN_k\in\N$ is an upper bound depending only of $k$ - and such that over any simply connected domain $\mathtt D\subset \C$ the intersection of the algebraic curve $\tR_S$ with $\mathtt D\times \C$ is the union of at most $k$ disjoint graphs of holomorphic functions defined over $\mathtt D$ if and only if $\mathtt D\cap\cN_S =\varnothing$.
	\end{lemma}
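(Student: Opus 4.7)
The plan is to define $\cN_S$ explicitly as the union of three finite sets, each capturing one of the three obstructions to the graph decomposition. First I would write $S(z,w)=\sum_{i=0}^d a_i(z)w^i$ with $d\le k$ and $a_d\not\equiv 0$, and factor out the $z$-content to get $S=g\cdot\tilde S$, where $g\in\C[z]$ is the greatest common divisor of $a_0,\dots,a_d$ and $\tilde S(z,w)=\sum_{i=0}^d\tilde a_i(z)w^i$ is primitive in $\C[z][w]$. After replacing $\tilde S$ by its squarefree part if needed (same zero set), let $\Delta(z):=\operatorname{disc}_w(\tilde S)\in\C[z]$; primitivity and squarefreeness guarantee $\Delta\not\equiv 0$. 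I would then define
\begin{equation*}
\cN_S:=\{z\in\C:g(z)=0\}\cup\{z\in\C:\tilde a_d(z)=0\}\cup\{z\in\C:\Delta(z)=0\}.
\end{equation*}
Since $\deg g\le k$, $\deg\tilde a_d\le k$, and $\deg\Delta\le (2d-1)k\le(2k-1)k$ (by a Sylvester matrix bound), the cardinality of $\cN_S$ is bounded by $\tN_k:=k+k+(2k-1)k$, depending only on $k$.

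For the "if" direction, I would assume $\mathtt D\cap\cN_S=\varnothing$. Since $g$ has no zeros in $\mathtt D$, the set $\tR_S\cap(\mathtt D\times\C)$ coincides with $\tR_{\tilde S}\cap(\mathtt D\times\C)$ and contains no vertical components. Because $\tilde a_d$ does not vanish on $\mathtt D$, for every $z\in\mathtt D$ the polynomial $\tilde S(z,\cdot)$ has exact degree $d$ and therefore $d$ roots in $\C$ counted with multiplicity; since $\Delta(z)\neq 0$ on $\mathtt D$ these roots are simple, and hence $\partial_w\tilde S(z_0,w_0)\neq 0$ at every $(z_0,w_0)\in\tR_{\tilde S}$ with $z_0\in\mathtt D$. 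The holomorphic implicit function theorem then produces, near each such point, a unique local holomorphic branch $z\mapsto w(z)$ solving $\tilde S(z,w(z))=0$. Gluing these local branches realises $\tR_{\tilde S}\cap(\mathtt D\times\C)$ as an unramified $d$-sheeted covering of $\mathtt D$; because $\mathtt D$ is simply connected, this cover splits as $d$ disjoint holomorphic graphs over $\mathtt D$ whose union is the whole intersection.

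For the converse, I would assume there exists $z_*\in\mathtt D\cap\cN_S$ and rule out the decomposition in each of three cases. If $g(z_*)=0$, then $S(z_*,\cdot)\equiv 0$, so the vertical line $\{z_*\}\times\C$ sits inside $\tR_S\cap(\mathtt D\times\C)$, which therefore cannot be a union of graphs of functions of $z$. If $\tilde a_d(z_*)=0$ while $g(z_*)\neq 0$, then at least one root of $\tilde S(z,\cdot)$ escapes to infinity as $z\to z_*$, so no decomposition into holomorphic graphs on all of $\mathtt D$ is possible. Finally, if $\Delta(z_*)=0$ while $g(z_*)\tilde a_d(z_*)\neq 0$, two roots of $\tilde S(z_*,\cdot)$ coincide, forcing two distinct graphs to meet at $z_*$ and violating disjointness.

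The main obstacle I expect is the bookkeeping of the squarefree reduction and the verification that these three polynomials exhaust the algebraic degeneracies: once this catalogue is proven complete, the "if" direction becomes a textbook application of the implicit function theorem together with monodromy on a simply connected base, and the quantitative bound $\tN_k$ follows from elementary degree estimates on the Sylvester resultant.
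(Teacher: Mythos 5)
Your proof is correct and follows the same essential strategy as the paper: package the obstructions into an explicit finite set $\cN_S$, run the holomorphic implicit function theorem on the simply connected complement, and bound $\operatorname{card}\cN_S$ by degree estimates. The one genuine difference is how $\cN_S$ is catalogued. You factor out the $z$-content $g$ and pass to the squarefree part of the primitive polynomial, then use a single discriminant condition $\Delta(z)=0$; the paper instead factors $\overline\cS=\prod_i S_i^{j_i}$ into irreducible components and lists separately the self-ramification loci $\{S_i=\partial_w S_i=0\}$ and the pairwise intersection loci $\{S_i=S_j=0\}$ (conditions (2) and (3) of Definition~\ref{esclusi}). The discriminant of the squarefree part vanishes at $z_0$ with $\tilde a_d(z_0)\neq 0$ exactly when $\widetilde\cS(z_0,\cdot)$ has a repeated root, which happens iff a single factor ramifies or two distinct factors collide, so the two descriptions carve out the same set. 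Your version is tidier and gives the sharper bound $\tN_k=O(k^2)$ via a Sylvester determinant estimate, versus the paper's $O(k^4)$ count obtained by summing Bézout bounds over pairs of factors. Two small points worth shoring up if you write this out in full: (i) the step "gluing these local branches realises $\tR_{\widetilde S}\cap(\mathtt D\times\C)$ as an unramified $d$-sheeted covering" needs the remark that $\tilde a_d\neq 0$ on $\mathtt D$ keeps the fibers uniformly bounded on compacts, so the projection is proper and hence an honest covering map before monodromy is invoked; (ii) the degenerate cases $d=0$ (where $S$ depends only on $z$ and $\Delta$ is undefined) and $S$ depending only on $w$ should be dispatched separately, as the paper does at the start of its proof.
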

	The proof of this result can be found by putting together known results on algebraic curves (see e.g. \cite{Narasimhan_1991}). For the sake of clarity, it is given in appendix \ref{provetta}.
	\begin{rmk}
		Following ref. \cite{Nekhoroshev_1973}, the elements of $\cN_S$ are called {\it excluded points}.
	\end{rmk}
	\begin{rmk}
		The number of graphs in Lemma \ref{Riemann leaves} may be equal to zero, for example if $S(z,w)=z$, we have $\tR_S=\{(z,w)\in\C^2: z=0\}$ and the point $z=0$ is excluded by construction (see Appendix \ref{provetta}).
	\end{rmk}
	\begin{defn}[Riemann branches and leaves]
		In the setting of Lemma \ref{Riemann leaves}, if $\mathtt R_S$ is non-empty over $\mathtt D$, the holomorphic functions whose graphs cover $\mathtt D$ are algebraic since  their graphs solve the equation
		$
		S(z,w)=0$ for all $z\in\mathtt D$. These functions will henceforth be called the {\it Riemann branches} of $S$ over $\mathtt D$, whereas their graphs will be referred to as the {\it Riemann leaves} of $S$ over $\mathtt D$.
		
	\end{defn}

  It is a standard fact that, up to constant multiplicative factors, any polynomial $S\in\Pol$ can be uniquely factorized as
 \begin{equation}\label{decomposition_2}
 	S(z,w)=q(z)\,  \displaystyle{\Pi_{i=1}^m} ({\mathcal{S}}_i(z,w))^{j_i}
 \end{equation}
 for some $1\le j_i\le k$, $1\le m\le k$, where the $\mathcal{S}_i$'s are non-constant, irreducible, mutually non-proportional polynomials. Hence, without any loss of generality, we can pass to the unit sphere in $\cQ(k)$ and assume $||q||=1$ for an arbitrary norm $\vert\vert \cdot \vert\vert$.
 
 We denote
 \begin{equation}\label{prime_bis}
\overline{\mathcal S}(z,w):=\displaystyle{\Pi_{i=1}^m}({\mathcal{S}}_i(z,w))^{j_i}
\end{equation}
and we have the polynomial product:
\begin{equation}\label{decompose}
	S(z,w)=q(z)\overline{\mathcal{S}}(z,w).
\end{equation}

We start by giving the following

\begin{defn}\label{defB}
 $\cB=\cB(k,\rho)\subset \Pol$ denotes the set of polynomials $S\in\Pol\backslash\{0\}$ such that the polynomial $\overline \cS$ in decomposition \eqref{decompose} has no excluded points (definition \ref{esclusi})  in $\cD_\rho(0)$.
\end{defn}

\begin{rmk}\label{remarkino}
	Given $S\in\cB$, by decomposition \eqref{decompose} and Definition \ref{esclusi}, the only possible excluded points for $S$ in $\cD_\rho(0)$ are those at which $q(z)=0$. Inside the disk $\cD_\rho(0)$, the algebraic curve $\tR_S$ is therefore the union of at most $k$ elements that can be either disjoint holomorphic Riemann leaves of $\overline \cS$ or vertical lines in $\C^2$ of the kind $z=z_0$, with $q(z_0)=0$. In particular, all the Riemann branches of $S\in\cB$ are holomorphic over $\cD_\rho(0)$.
\end{rmk}

	\begin{rmk}
		The set $\cA$ of Definition \ref{branch} is contained in $\cB$ and, with the notations of Theorem \ref{Bernstein0}, the functions in $\cV(k,\rho)$ are precisely those associated to the polynomials in $\cB$.
\end{rmk}
In order to prove Lemma \ref{chiusura}, we need the following
\begin{lemma}\label{panettone}
	$\cB\cup \{0\}$ is closed in $\Pol$.
\end{lemma}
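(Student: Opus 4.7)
The goal is to show that if a sequence $\{S_n\}_{n\in\N}\subset\cB$ converges to some $S_\infty\in\Pol$, then either $S_\infty=0$ or $S_\infty\in\cB$. Assuming $S_\infty\neq 0$, the plan is to extract a convergent subsequence of the canonical decompositions $S_n = q_n\,\overline{\cS}_n$ and then to transfer the ``no excluded points'' condition from the $\overline{\cS}_n$ to the limit, combining Hurwitz's theorem for polynomials in $z$ with a normal-family argument on the Riemann branches in $w$.

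First I would normalize so that $\|q_n\|=1$ in $\cQ(k)$, as suggested just after equation \eqref{decomposition_2}. Since $\cQ(k)$ and $\Pol$ are finite-dimensional and all relevant degrees are bounded by $k$, compactness allows me to extract a subsequence (still denoted $S_n$) such that $q_n\to q_*\in\cQ(k)$ with $\|q_*\|=1$ and $\overline{\cS}_n\to T\in\Pol$. By continuity of polynomial multiplication, $S_\infty=q_*\cdot T$, and $T\neq 0$ because $S_\infty\neq 0$. The polynomial $T$ need not coincide with the canonical factor $\overline{\cS}_\infty$ of $S_\infty$ in the decomposition \eqref{decompose}, but $\overline{\cS}_\infty$ divides $T$ up to a scalar in $\C[z][w]$; since the excluded set of a factor is contained in the excluded set of any of its multiples, it is enough to prove that $T$ has no excluded points in $\cD_\rho(0)$.

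By the explicit description of $\cN_S$ in Appendix A (see \ref{esclusi}), the excluded set of $\overline{\cS}_n$ in $\cD_\rho(0)$ is controlled by two polynomials in $z$: the leading coefficient $\ell_n(z)$ of $\overline{\cS}_n$ with respect to $w$, and the discriminant $\Delta_n(z)$ (with respect to $w$) of its squarefree part. By hypothesis, neither $\ell_n$ nor $\Delta_n$ has zeros in $\cD_\rho(0)$. After extracting a further subsequence along which $\deg_w\overline{\cS}_n$ and the degree of its squarefree part are constant, one has $\ell_n\to\ell_*$ and $\Delta_n\to\Delta_*$ in $\C[z]$. Applying Hurwitz's theorem on $\cD_\rho(0)$, each of $\ell_*$ and $\Delta_*$ either vanishes identically or is nowhere zero in $\cD_\rho(0)$. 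In the non-degenerate case where both limits are nonzero, $T$ has no excluded points in $\cD_\rho(0)$ and the conclusion follows.

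The main obstacle is the degenerate case where $\ell_*$ or $\Delta_*$ vanishes identically, which can occur when the leading coefficient of $\overline{\cS}_n$ in $w$ collapses or when two distinct irreducible factors of $\overline{\cS}_n$ merge in the limit. To handle it, I would argue geometrically on the Riemann branches. By Remark \ref{remarkino}, each $\overline{\cS}_n$ admits at most $k$ holomorphic Riemann branches $g_n^{(1)},\dots,g_n^{(p_n)}$ on $\cD_\rho(0)$; after a further subsequence extraction, $p_n$ is constant equal to some $p\le k$. A uniform local bound on these branches, obtained from the normalization of $q_n$ together with an elementary degree argument ruling out escape to infinity, lets Montel's theorem (Corollary \ref{montel}) produce locally uniform limits $g^{(i)}$, holomorphic on $\cD_\rho(0)$ and satisfying $T(z,g^{(i)}(z))=0$ by Lemma \ref{sequence}. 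A second application of Hurwitz's theorem prevents accidental merging in the limit: for each pair $i\neq j$ the difference $g_n^{(i)}-g_n^{(j)}$ is zero-free on $\cD_\rho(0)$, so its limit $g^{(i)}-g^{(j)}$ is either identically zero or zero-free. Consequently, $\tR_T\cap(\cD_\rho(0)\times\C)$ still decomposes as at most $k$ disjoint holomorphic graphs together with vertical lines over the zeros of $q_*$, which is exactly the absence of excluded points for $T$, and the lemma follows.
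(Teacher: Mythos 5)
Your non-degenerate case is a clean shortcut and works: tracking $\ell_n$ and $\Delta_n$ through Hurwitz on $\cD_\rho(0)$ is a nice reduction, and your observation that $\cN_{\overline{\cS}_\infty}\subset\cN_T$ is correct, so it does suffice to control $T$. The problem is that the degenerate case is where all the difficulty of the lemma is concentrated, and the key step you invoke there — \emph{a uniform local bound on all the Riemann branches $g_n^{(1)},\dots,g_n^{(p_n)}$, "obtained from the normalization of $q_n$ together with an elementary degree argument"} — is simply false. The normalization $\|q_n\|=1$ concerns only the $z$-only factor and says nothing about the branches of $\overline{\cS}_n$. And branches \emph{do} escape to infinity precisely when the leading $w$-coefficient collapses, which is exactly your degenerate case $\ell_*\equiv 0$. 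For instance $\overline{\cS}_n(z,w)=\tfrac1n w^2+(z+2\rho)w-1$ has, for $n$ large, both branches holomorphic on $\cD_\rho(0)$ with no excluded points there (so $S_n\in\cB$), yet the branch $w_-(z)=\tfrac{n}{2}\bigl(-(z+2\rho)-\sqrt{(z+2\rho)^2+4/n}\bigr)\sim -n(z+2\rho)$ diverges locally uniformly. So Montel cannot be applied to the whole family of branches, and without a way to select the right subfamily the argument stops.

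What is needed is exactly the content of Lemma \ref{sesto} in the paper: one first anchors a single branch $h_n$ of $\overline{\cS}_n$ at a fixed regular point $z^\star$ via continuous dependence of roots (so that $h_n(z^\star)$ converges to a root of $S^{z^\star}$), and then shows this \emph{anchored} sequence is locally bounded on $\cD_\rho(0)\backslash\cN_S$ by a genuine argument — an intermediate-value-theorem trick along arcs in $\tE_r$, using Theorem \ref{continuous_dependence} to show that the modulus $|h_{\varphi(n)}\circ\gamma_n|$ cannot cross the "forbidden band" between the uniform bound $w_{\max}(r)$ on the limit branches and $1/\eps_0$. This is the heart of the proof, not an elementary degree count. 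A secondary gap: even granting convergent limits $g^{(i)}$, you assert without argument that their graphs (together with vertical lines) cover $\tR_T\cap(\cD_\rho(0)\times\C)$; the paper sidesteps this by never working with the global branch decomposition of $T$, but instead analyzing each candidate excluded point $z_0\in\cN_{\overline{\cS}}$ locally, using Riemann's removable-singularity theorem and Hurwitz to rule out poles and intersections one at a time.
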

The proof of Lemma \ref{panettone} is quite technical and requires some intermediate results, which are stated in the sequel.

We start by considering a sequence $\{S_n(z,w)\}_{n\in\N}$ of polynomials in $\cB\cup\{0\}$, converging to a polynomial $S\in\Pol$. We can assume that $S\not\equiv 0$ otherwise there is nothing to prove; hence we have $S_n\not\equiv 0$ for $n$ large enough.

 Following decomposition \eqref{decompose}, we write $S_n(z,w):=q_n(z)\overline \cS_n(z,w)$ and, by construction, the sequence of polynomials $\{q_n\}_{n\in\N}$ is in the compact unit sphere and admits a converging subsequence. In order not to burden notations, with slight abuse, in the sequel we shall indicate this subsequence with the same symbol $\{q_n\}_{n\in\N}$ and we shall denote with $\widehat q$ its limit, which is not identically null by construction.

We recall that $\cN_S$ and $\cN_{S_n}$ (for $n\in\N$) denote the sets of excluded points of $S$ and $S_n$, respectively. For $r>0$ small enough, we remove from $\cD_\rho(0)$ all open neighborhoods of radius $r$ around the excluded points of $S$ and consider the following compact set:
\begin{equation}\label{eerre}
{\mathtt E}_r:=\{z\in \coD_{\rho -r}(0)\ /\ |z-z_0|\geq r\ {\rm for}\ z_0\in \cN_S\}\subset\cD_\rho (0) .
\end{equation}

\begin{lemma}\label{accumulazione}
	There exists $r_0=r_0(\rho, k)$ such that, for any $0<r\le r_0$, one has ${\mathtt E}_r\not=\varnothing$ and there exists an integer $n_0=n_0(r)$ such that:
	\begin{equation}
	{\mathtt E}_r\cap \cN_{S_n}=\varnothing
	\text{  for all } n\geq n_0\ .
	\end{equation}
\end{lemma}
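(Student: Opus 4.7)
The plan is to split the lemma into its two assertions: non-emptiness of $\mathtt{E}_r$ for small $r>0$, and the eventual disjointness $\mathtt{E}_r\cap\cN_{S_n}=\varnothing$. Both rest on the uniform cardinality bound $\text{card}\,\cN_T\leq \tN_k$ provided by Lemma \ref{Riemann leaves}, valid for every polynomial $T\in\Pol$ and depending only on $k$.

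For non-emptiness, I would use a direct area argument. By definition, $\mathtt{E}_r$ is obtained from $\coD_{\rho-r}(0)$ by removing at most $\tN_k$ open disks of radius $r$, so the area removed is bounded by $\pi r^2 \tN_k$, while the area of $\coD_{\rho-r}(0)$ is $\pi(\rho-r)^2$. Demanding $\pi r^2 \tN_k<\pi(\rho-r)^2$ produces an explicit threshold $r_0=r_0(\rho,k)$ (for instance $r_0=\rho/(2(1+\sqrt{\tN_k}))$) below which the removed disks cannot exhaust $\coD_{\rho-r}(0)$; hence $\mathtt{E}_r$ is non-empty of positive area.

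The second assertion rests on the continuous dependence of excluded points on the polynomial. According to the construction in Appendix \ref{provetta} (Definition \ref{esclusi}), for every $T\in\Pol$ the set $\cN_T$ coincides with the zero set of a polynomial $\Delta_T\in\C[z]$ whose coefficients are polynomial expressions in the coefficients of $T$ --- essentially the product of the $w$-leading coefficient of $T$ and its $w$-discriminant. Since $\Pol$ is finite-dimensional and $S_n\to S$ coefficient-wise, one obtains $\Delta_{S_n}\to \Delta_S$ in $\C[z]$, and the finiteness of $\cN_S$ (bounded by $\tN_k$) forces $\Delta_S\not\equiv 0$. Given $r>0$, the compact set $\mathtt{E}_r$ is covered by finitely many closed disks $\overline{\cD}_{r/2}(w_j)$ on each of which $\Delta_S$ is zero-free; uniform convergence $\Delta_{S_n}\to\Delta_S$ on each such disk, combined with Rouch\'e's theorem, yields an integer $n_j$ such that $\Delta_{S_n}$ has no zero on $\overline{\cD}_{r/2}(w_j)$ whenever $n\geq n_j$. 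Taking $n_0(r):=\max_j n_j$ then gives $\mathtt{E}_r\cap\cN_{S_n}=\varnothing$ for every $n\geq n_0$.

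The principal technical subtlety is that $\deg \Delta_{S_n}$ may be strictly larger than $\deg \Delta_S$, so that some zeros of $\Delta_{S_n}$ can escape to infinity as $n\to\infty$. This does not affect the argument because the Rouch\'e step is purely local and is applied only inside the fixed compact region $\coD_\rho(0)$, where the finitely many zeros of $\Delta_S$ form the only obstacle. Packaging the continuous-dependence statement in a form robust under this possible degree drop is therefore the only genuinely delicate point of the proof.
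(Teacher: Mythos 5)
Your area argument for $\mathtt{E}_r\neq\varnothing$ is fine (and more explicit than the paper's one-line remark). The second half, however, has a genuine gap. You posit a polynomial $\Delta_T\in\C[z]$ --- ``essentially the product of the $w$-leading coefficient of $T$ and its $w$-discriminant'' --- whose zero set is $\cN_T$ and whose coefficients depend polynomially on the coefficients of $T$. No such $\Delta_T$ exists in general. First, $\cN_T$ is defined (Definition~\ref{esclusi}) through the factorization $T=q\,\Pi_i\cS_i^{j_i}$ into irreducibles, and that factorization is \emph{not} a continuous (let alone polynomial) function of the coefficients of $T$; the map $T\mapsto q$ already jumps in codimension one. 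Second, and more fatally, if any multiplicity $j_i>1$ then $T^z$ has a repeated $w$-root for every $z$, so $\operatorname{disc}_w(T^z)\equiv 0$ and your $\Delta_T$ is the zero polynomial, whereas $\cN_T$ is still a finite set. So the delicate point is not just that $\deg\Delta_{S_n}$ may drop in the limit; the object itself is not well defined for non-reduced polynomials, which are not excluded here.

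The paper sidesteps all of this by exploiting the hypothesis $S_n\in\cB$, which you do not use. For $S_n\in\cB$, Remark~\ref{remarkino} gives $\cN_{S_n}\cap\cD_\rho(0)=\{q_n=0\}\cap\cD_\rho(0)$, so only the one-variable factor $q_n$ matters. After normalizing $\|q_n\|=1$ and passing to a subsequence, $q_n\to\widehat q\not\equiv 0$; the limit factorization $S=(\widehat q\,\widetilde q)\,\overline\cS$ shows that the roots of $\widehat q$ lie in $\cN_S$, and then continuous dependence of roots (Theorem~\ref{continuous_dependence}) applied to $q_n\to\widehat q$, together with $\mathtt{E}_r$ being a fixed compact at positive distance from $\cN_S$ and $r_0<1/\rho$ to handle roots escaping to infinity, gives $\mathtt{E}_r\cap\cN_{S_n}=\varnothing$ for $n$ large. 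To repair your argument you should replace the ill-defined $\Delta_T$ by $q_n$ itself and invoke $S_n\in\cB$; your Rouch\'e-style covering step then works as you intended.
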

\begin{proof}
		The fact that $\tE_r\neq \varnothing$ for $r$ sufficiently small is an immediate consequence of Definition \ref{eerre} and of the fact that $\text{card}\,\cN_S$ is bounded by a number depending only on $k$ (see Lemma \ref{Riemann leaves}).
	
As for the second part of the statement, since $S_n\longrightarrow S\in\Pol$, and $q_n\rightarrow \widehat q\not\equiv 0$, there exists a polynomial $\widehat S\in\Pol$ such that
	\begin{equation}\label{capraiaelimite}
	\lim_{n\longrightarrow+\infty}S_n(z,w)=
	\lim_{n\longrightarrow+\infty}\overline \cS_n(z,w)\times \lim_{n\longrightarrow+\infty} q_n(z)= \widehat S(z,w)\times \widehat q(z).
	\end{equation}

	By applying again decomposition \eqref{decompose} to $\widehat S$ we obtain $\widehat S(z,w)=\widetilde q(z) \overline \cS(z,w)$, so that we can write $S(z,w)=q(z)\overline \cS(z,w)$ by setting
	\begin{equation}\label{cucu}
	q(z):=\widehat q(z)\times \widetilde q(z) \ .
	\end{equation}
	Therefore, all the roots of $\widehat q$ are also roots of $q$ and belong to $\cN_S$.	
	By construction (see also remark \ref{remarkino}), all  points in $\cN_{S_n}$ are roots of $q_n(z)=0$. Since $q_n\longrightarrow \widehat q$, taking into account the continuous dependence of the roots of a polynomial on its coefficients expressed in Theorem \ref{continuous_dependence}, one has that for sufficiently high $n$ the roots of $q_n$ must be either $r$-close to the roots of $\widehat q $, and hence to some point of $\cN_S$, or outside of the disc of radius $\cD_{1/r}(0)$.
	Taking $r_0<1/\rho$, one has $\cD_{1/r}(0)\supset \cD_\rho(0)$, whence the thesis.
\end{proof}

 We fix $0<r\le r_0$, with $r_0$ the bound in Lemma \ref{accumulazione}, and  we consider a point $z^\star\in \tE_r$, hence $z^\star$ is not an excluded point of $S$ and any solution of $S^{z^\star}(w):=S(z^\star,w)=0$ must belong to the image of a Riemann branch of $S$ holomorphic in a neighbourhood of $z^\star$. We fix one of these branches and denote it with $h$. The continuous dependence of the zeros of a polynomial on its coefficients (Theorem \ref{continuous_dependence}) ensures the existence of a sequence $\{w^\star_n\}_{n\in\N}$ of roots of $ S^{z^\star}_n(w):=S_n(z^\star,w)$ such that
$$
w^\star_n\longrightarrow h(z^\star)\ .
$$
 Lemma \ref{accumulazione} and Remark \ref{remarkino} together with the fact that $S_n\in\cB$ for all $n\in\N$ ensure that, for any fixed $n>n_0 (r)$, the point $(z^\star,w^\star_n)$ must belong to the Riemann leaf of one of the branches of ${\overline \cS}_n$, denoted $h_n$, which is holomorphic over ${\mathcal D}_\rho (0)$. Hence we have the pointwise convergence
\begin{equation}\label{puntuale}
h_n(z^\star)\longrightarrow h(z^\star)\ .
\end{equation}
We show in the sequel that the sequence  $\{h_n\}_{n\in\N}$ admits a subsequence that converges uniformly on any compact subset of $\cD_\rho (0)\backslash\cN_S$ to an holomorphic function which extends $h$ over $\cD_\rho (0)\backslash\cN_S$. In order to prove this claim, which is fundamental to demonstrate Lemma \ref{panettone}, we need the following results.

\begin{lemma}\label{bounded}
	The Riemann branches of $S$ are bounded on the compact sets included in $\cD_\rho (0)\backslash\cN_S$.
\end{lemma}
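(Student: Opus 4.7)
The plan is to extract a uniform bound on any Riemann branch of $S$ from the polynomial equation it satisfies, exploiting the fact that the leading coefficient (in $w$) of the reduced factor $\overline{\cS}$ does not vanish on $\cD_\rho(0)\setminus\cN_S$.

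First, I would use the decomposition \eqref{decompose} and expand $\overline{\cS}$ in powers of $w$:
$$\overline{\cS}(z,w)\;=\;\sum_{j=0}^{\tilde d} a_j(z)\,w^j,\qquad a_{\tilde d}\not\equiv 0,\ 1\le \tilde d\le k,$$
where the $a_j(z)$ are polynomials in $z$ and $\tilde d\ge 1$ because every irreducible factor $\mathcal{S}_i$ has positive degree in $w$. By the explicit construction of the excluded set in Appendix \ref{provetta} (which collects the zeros of the leading coefficient in $w$ and the zeros of the discriminant, i.e.\ the points at which the implicit function theorem for $\overline{\cS}(z,w)=0$ fails), the zeros of $a_{\tilde d}$ are contained in $\cN_S$. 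Hence $a_{\tilde d}(z)\neq 0$ for every $z\in\cD_\rho(0)\setminus\cN_S$.

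Next, I would observe that every Riemann branch $h$ of $S$ is, by Definition, a holomorphic function whose graph lies in $\tR_S$, i.e.\ it satisfies $q(z)\overline{\cS}(z,h(z))=0$ on its (simply connected) domain of definition in $\cD_\rho(0)\setminus\cN_S$. Since $q$ has only finitely many zeros and $h$ is holomorphic, continuity forces
$$\overline{\cS}(z,h(z))\;=\;0$$
identically on the whole domain of $h$.

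Finally, given a compact $K\subset \cD_\rho(0)\setminus\cN_S$, I would set
$$m:=\min_{z\in K}|a_{\tilde d}(z)|>0,\qquad M:=\max_{z\in K}\sum_{j=0}^{\tilde d-1}|a_j(z)|,$$
and apply the elementary root bound: if at some $z\in K$ one had $|h(z)|\ge 1$, then rewriting $a_{\tilde d}(z)h(z)^{\tilde d}=-\sum_{j<\tilde d}a_j(z)h(z)^j$ gives $m|h(z)|^{\tilde d}\le M|h(z)|^{\tilde d-1}$, hence $|h(z)|\le M/m$. In all cases $|h(z)|\le \max(1,M/m)$ for every $z\in K$, a bound independent of the particular Riemann branch (of which there are at most $k$). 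The only genuinely delicate point is the inclusion of the zero set of $a_{\tilde d}$ in $\cN_S$, but this is built into the definition of excluded points in Appendix \ref{provetta} and so is a bookkeeping check rather than a real obstacle.
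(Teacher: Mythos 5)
Your proof is correct, and it takes a genuinely different route from the paper's. The paper argues softly: every point of $\cD_\rho(0)\setminus\cN_S$ is regular, so it has a neighbourhood $V$ over which $\tR_S\cap(V\times\C)$ is a finite union of graphs of bounded holomorphic functions, and a compact set is covered by finitely many such neighbourhoods. You instead extract an explicit bound from the algebraic equation: writing $\overline{\cS}(z,w)=\sum_{j\le\tilde d}a_j(z)w^j$, using that the zero set of the leading coefficient of $\overline{\cS}$ in $w$ coincides with that of $a_\ell$ in Definition \ref{esclusi}(4) (both are the union of the zero sets of the leading $w$-coefficients of the irreducible factors $\cS_i$), so $a_{\tilde d}$ is bounded away from zero on a compact $K\subset\cD_\rho(0)\setminus\cN_S$, and then invoking the Cauchy root bound to get $|h(z)|\le\max(1,M/m)$ uniformly on $K$. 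The paper's argument is shorter and avoids any coefficient bookkeeping; your argument is longer but yields a concrete, quantitative estimate for the bound, which the covering argument does not. Both are valid, and the only delicate point in your version — that the vanishing locus of the top $w$-coefficient of $\overline{\cS}$ sits inside $\cN_S$ — you correctly identify and it is indeed exactly condition (4) in the appendix.
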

\begin{proof}
	
	By construction, any point $\widehat z\in\cD_\rho (0)\backslash\cN_S$ is regular for $S$, hence there exists an open neighbourhood $V\subset\C$ of $\widehat z$ such that the algebraic curve $\tR_S\cap \{V\times\C\}$ is composed of at most $k$ graphs of holomorphic functions bounded over $V$. Since any compact set included in $\cD_\rho (0)\backslash\cN_S$ can be covered by a finite number of these neighbourhoods, the claim is proved.
\end{proof}

\begin{lemma}\label{sesto}
	The sequence
	$\{h_n\}_{n\in\N}$ is locally bounded (\ref{bounded families}) over $\cD_\rho (0)\backslash\cN_S$.
\end{lemma}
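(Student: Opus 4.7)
The plan is to show that for any compact $K\subset \cD_\rho(0)\setminus\cN_S$ there exist $C=C(K)>0$ and an integer $n_1$ such that $|h_n(z)|\le C$ for all $z\in K$ and $n\ge n_1$. First, since $\cN_S$ is finite (Lemma \ref{Riemann leaves}), the set $\cD_\rho(0)\setminus\cN_S$ is open and connected; by enlarging $K$ with a compact connecting arc if necessary, I may therefore assume that $K$ is compact, connected and contains $z^\star$. This reduction is what will allow me to propagate the pointwise information \eqref{puntuale} to the whole of $K$.

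Next I would identify the limit polynomial. By \eqref{capraiaelimite} and \eqref{cucu} one has $\overline{\cS}_n\to \widetilde q\,\overline{\cS}$ in the finite-dimensional space $\Pol$. For $z\in K$ the condition $z\notin \cN_S$ forces $q(z)=\widehat q(z)\widetilde q(z)\neq 0$, so in particular $\widetilde q(z)\neq 0$, and hence $w\mapsto \widetilde q(z)\,\overline{\cS}(z,w)$ has exactly the same roots in $w$ as $\overline{\cS}(z,\cdot)$, namely the values at $z$ of the Riemann branches of $S$. By Lemma \ref{bounded} these are uniformly bounded on $K$ by some constant $M_0$. Moreover the leading coefficient (in $w$) of this limit polynomial, being the product of $\widetilde q(z)$ with the leading coefficient of $\overline{\cS}(z,\cdot)$---both of which are non-vanishing on $K$ since $z\notin\cN_S$---is bounded below on $K$ by a positive constant thanks to compactness.

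The key step is a \emph{uniform splitting} of the roots of $\overline{\cS}_n(z,\cdot)$ on $K$. Since the coefficients of $\overline{\cS}_n(z,\cdot)$ in $w$ are polynomials in $z$ converging in $\Pol$, they converge uniformly to those of $\widetilde q(z)\,\overline{\cS}(z,w)$ on $K$. Combined with the continuous dependence of the roots of a polynomial on its coefficients (Theorem \ref{continuous_dependence}), a standard compactness argument---covering $K$ by a finite family of small discs and applying Rouch\'e's theorem on each of them---yields $\epsilon>0$, a sequence $R_n\to+\infty$ and an integer $n_1$ such that, for every $n\ge n_1$ and every $z\in K$, each root in $w$ of $\overline{\cS}_n(z,\cdot)$ has modulus in $[0,M_0+\epsilon]\cup[R_n,+\infty)$: the first interval collects the ``stable'' roots close to those of the limit polynomial, and the second collects the ``escaping'' roots, which can exist only when $\deg_w\overline{\cS}_n>\deg_w(\widetilde q\,\overline{\cS})$ and are pushed to infinity by the uniform vanishing on $K$ of the top coefficient of $\overline{\cS}_n$.

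Finally I would conclude by connectedness. Since $h_n$ is continuous on $\cD_\rho(0)$ and its values on $K$ are roots of $\overline{\cS}_n(z,\cdot)$, the set $|h_n|(K)$ is a connected subset of $[0,M_0+\epsilon]\cup[R_n,+\infty)$, hence entirely contained in one of the two intervals. The pointwise convergence \eqref{puntuale} forces $|h_n(z^\star)|\le M_0+\epsilon$ for $n$ large, which rules out the second alternative and yields $|h_n(z)|\le M_0+\epsilon$ uniformly for $z\in K$ and $n\ge n_1$, giving the desired local boundedness. The main obstacle in this plan is precisely the uniform splitting of the previous paragraph, which relies crucially on the non-vanishing---and uniform lower bound on $K$---of the leading coefficient in $w$ of the limit polynomial $\widetilde q\,\overline{\cS}$.
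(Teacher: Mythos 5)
Your proposal is correct and follows essentially the same strategy as the paper's proof: both rest on Lemma~\ref{bounded} to bound the roots of the limit polynomial on a compact, on a root-perturbation argument (you via Rouch\'e with a lower bound on the leading coefficient, the paper via Theorem~\ref{continuous_dependence}) to split the roots of $\overline\cS_n(z,\cdot)$ into a bounded cluster and a far-away cluster uniformly in $z$, and on connectedness combined with the pointwise control at $z^\star$ to force $|h_n|$ into the bounded cluster. The paper phrases the last step as the intermediate value theorem along an explicit arc $\gamma_n$ inside $\tE_r$ and argues by contradiction, which is the same topological fact as your ``connected image must lie in one component of $[0,M_0+\epsilon]\cup[R_n,\infty)$''.
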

\begin{proof}
  If, by absurd, there exists a compact $\mathtt K\subset\cD_\rho (0)\backslash\cN_S$ such that $\{h_n\}_{n\in\N}$ is unbounded over $\mathtt K$, then there exists a sequence $\{ z_n\}_{n\in\N}$ in $\mathtt K$ and a strictly increasing function $\varphi$ over $\N$ such that the subsequence $\{\vert h_{\varphi (n)}(z_n)\vert \}_{n\in\N}$ diverges.
	
By Definition \eqref{eerre}, $\cD_\rho (0)\backslash\cN_S=\cup_{r>0}\,\tE_r$, so that there exists $0<r\le r_0$ small enough such that $\mathtt K\subset{\mathtt E}_r\subset\cD_\rho (0)\backslash\cN_S$. Moreover, ${\mathtt E}_r$ is a compact, arc-connected set since it is $\coD_{\rho -r} (0)$ without a finite number of open disks. Then, for any $n\in\N$ it is always possible to construct a continuous arc: 
	$$\gamma_n :[0,1]\longrightarrow{\mathtt E}_r\ {\rm with}\ \gamma_n (0)=z^\star\ {\rm and}\  \gamma_n (1)=z_n .$$ 
	
	We introduce the continuous functions:
	$$
	\psi_{n}:[0,1]\longrightarrow \R\quad ,\qquad \psi_{n}(t):=|h_{\varphi (n)}(\gamma_n(t))|\ .
	$$  
	Since $S_n^z\rightarrow S^z$ uniformly for $z\in\coD_{\rho -r}(0)$, Theorem \ref{continuous_dependence} ensures that, for all $\eps>0$, there exists $\mathtt n(\eps)\in\N$ such that for all $n>\mathtt n (\eps)$ and all $z\in\coD_{\rho -r}(0)$, the roots of $S_n^z$ are either $\eps$-close to the roots of $S^z$ or in the complement of the closed disk $\coD_{1/\eps}(0)$.
	Moreover, Lemma \ref{bounded} ensures that the roots of $S^z$ are uniformly bounded for all $z\in{\mathtt E}_r$.   We indicate with $w_{max}(r)$ the maximal module that the Riemann branches of $S$ can reach on ${\mathtt E}_r$ and we set
	$$\eps_0 (r) =\frac{1}{w_{max}(r)+1}.$$
	
	In this setting, we can consider a fixed integer $n>{\mathtt{n}(\eps_0 (r))}$ such that:
	\begin{equation}\label{siparte}
	\psi_{n}(1)>\frac{1}{\eps_0}=w_{max}(r)+1
	\end{equation}
	and taking \eqref{puntuale} into account, we also assume that $n$ is high enough to ensure:
	\begin{equation}\label{siresta}
	|\,\psi_{n}(0)-|h(z^\star)|\,|=|\,|h_{\varphi(n)}(z^\star)|-|h(z^\star)|\,|\le|h_{\varphi(n)}(z^\star)-h(z^\star)|<\eps_0 <1
	\end{equation}
	hence $\psi_{n}(0)<|h(z^\star)| +1\leq w_{max}(r)+1$.
	
	With \eqref{siparte} and \eqref{siresta}, the intermediate value theorem applied to $\psi_{n}$ implies that there exists $t_\star\in ]0,1[$ satisfying
	\begin{equation}\label{zumpapa}
	\psi_{n}(t_\star)=w_{max}(r)+1
	\ .
	\end{equation}
	
	But $n>\mathtt n (\eps_0(r))$ and $\gamma_{n}(t_\star)\in \coD_{\rho -r}(0)$, hence $h_{\varphi (n)}(\gamma_n(t_\star))$ is either in the complement of the closed disk $\coD_{1/\eps_0}(0)$ and
	\begin{equation}\label{alligalli1}
	\psi_{n}(t_\star)>\frac{1}{\eps_0}=w_{max}(r)+1
	\end{equation}
	or $\eps_0$-close to the roots of $S^z$ and
	\begin{equation}\label{alligalli2}
	\psi_{n}(t_\star )<w_{max}(r) +\eps_0 < w_{max}(r)+1.
	\end{equation}
	
	Conditions \eqref{zumpapa}, \eqref{alligalli1} and \eqref{alligalli2} are in contradiction and the statement is proved.
\end{proof}

\begin{lemma}\label{convergenza}
	
	There exists a subsequence of $\{h_n\}_{n\in\N}$ which converges uniformly on the compact subsets of $\cD_\rho (0)\backslash\cN_S$ to a Riemann branch of $S$ (still denoted $h$) extending holomorphically $h$ over $\cD_\rho (0)\backslash\cN_S$.
\end{lemma}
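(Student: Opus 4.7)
The plan is to apply Montel's theorem to extract a convergent subsequence of $\{h_n\}$, pass to the limit in the defining algebraic relation $S_n(z,h_n(z))=0$, and identify the limit with the desired holomorphic extension of $h$ by using the pointwise convergence at $z^{\star}$ together with the disjointness of Riemann leaves at non-excluded points.

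First, I would invoke Lemma \ref{sesto}: the sequence $\{h_n\}_{n\in\N}$ is locally bounded on the open, connected set $\cD_\rho(0)\setminus\cN_S$, hence by Montel's theorem (Corollary \ref{montel}) one can extract a subsequence $\{h_{n_j}\}_{j\in\N}$ converging uniformly on every compact subset of $\cD_\rho(0)\setminus\cN_S$ to a function $\tilde h$, which is automatically holomorphic on that open set.

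Next, I would pass to the limit in the algebraic identity $S_n(z,h_n(z))=0$. Since $S_n\longrightarrow S$ in the finite-dimensional space $\Pol$, the convergence $S_n(z,w)\longrightarrow S(z,w)$ is uniform on every compact subset of $\C^2$; combined with the locally uniform convergence $h_{n_j}\longrightarrow \tilde h$ on compacts of $\cD_\rho(0)\setminus\cN_S$, this yields
$$
S(z,\tilde h(z))=\lim_{j\to +\infty}S_{n_j}(z,h_{n_j}(z))=0\qquad\text{for every } z\in\cD_\rho(0)\setminus\cN_S,
$$
so $\tilde h$ is a holomorphic solution of the algebraic equation $S(z,w)=0$ over $\cD_\rho(0)\setminus\cN_S$.

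Finally, I would use the pointwise convergence $h_n(z^{\star})\longrightarrow h(z^{\star})$ established in \eqref{puntuale} to obtain $\tilde h(z^{\star})=h(z^{\star})$. Since $z^{\star}\in\tE_r$ is not an excluded point of $S$, Lemma \ref{Riemann leaves} ensures that, in a simply connected neighborhood of $z^{\star}$, the algebraic curve $\tR_S$ consists of at most $k$ \emph{disjoint} graphs of holomorphic functions; the pointwise equality therefore forces $\tilde h\equiv h$ on such a neighborhood. Hence $\tilde h$ provides the desired holomorphic extension of $h$ to $\cD_\rho(0)\setminus\cN_S$, and locally it coincides with a Riemann branch of $S$.

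The main subtle point is the identification in the last step: \emph{a priori}, the subsequential limit $\tilde h$ could coincide with some other Riemann branch of $S$ rather than with $h$ itself. The disjointness of the Riemann leaves above the non-excluded point $z^{\star}$ is precisely the mechanism that converts the scalar convergence $h_{n_j}(z^{\star})\to h(z^{\star})$ into the local identity $\tilde h\equiv h$ near $z^{\star}$; the first two steps are then routine applications of Montel's theorem and uniform passage to the limit in a polynomial identity.
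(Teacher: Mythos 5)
Your proof is correct and follows essentially the same route as the paper's: Lemma \ref{sesto} plus Montel's theorem to extract a locally uniform subsequential limit, passage to the limit in the algebraic identity (the paper packages this step in Lemma \ref{sequence}, which you in effect reprove inline), and identification of the limit with $h$ via the pointwise convergence \eqref{puntuale} together with the disjointness of the Riemann leaves above the non-excluded point $z^{\star}$. If anything, you are more explicit than the paper on the identification step, which the paper glosses over by simply relabeling the subsequential limit ``$h$''.
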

\begin{proof}
	With Lemma \ref{sesto} and Montel's Theorem \ref{montel}, it is possible to extract a subsequence - still denoted $\{h_n\}_{n\in\N}$ with slight abuse of notation - that converges uniformly on the compact subsets of $\cD_\rho (0)\backslash\cN_S$ to a function holomorphic over $\cD_\rho (0)\backslash\cN_S$ which is also still denoted $h$.   Finally, thanks to Lemma \ref{sequence} and to the fact that $S_n\longrightarrow S$, one has $S(z,h(z))=0$ for any $z\in \cD_\rho(0)$.\end{proof}

\begin{rmk}
	By the above Lemma, $\cN_S$ does not contain any ramification points.
\end{rmk}

With the help of Lemma \ref{convergenza}, we are now able to prove Lemma \ref{panettone}.

\begin{proof}{\it(Lemma \ref{panettone})}
	
	The aim is to prove that the set of excluded points $\cN_{\overline \cS}$ for ${\overline \cS}$ associated to the limit polynomial $S$ is empty, from which the thesis follows.
	
	Assume that $z_0\in\cN_{\overline \cS}$. Since $\cN_{\overline \cS}$ is a finite set, for $t>0$ small enough the punctured disc $\disp{t}{z_0}:=\{z\in\cD_\rho(0): 0<|z-z_0|<t\}$ is included in $\cD_\rho (0)\backslash\cN_{\overline \cS}$ and any branch $h$ of the polynomial ${\overline \cS}$ is holomorphic in $\disp{t}{z_0}$. Then, by Laurent's Theorem \ref{classificazione} and Proposition \ref{essential}, $z_0$ is either a removable singularity or a pole. We show that the second possibility does not occur.
	
	If by absurd $z_0$ is a pole for $h$, then $\lim_{z\longrightarrow z_0}h(z)$ is infinite and one can choose the radius $t$ small enough so that $h(z)\neq 0$ for all $z\in\disp{t}{z_0}$. Hence the function $\phi :=1/h$ is analytic on the punctured disc $\disp{t}{z_0}$ and it is also bounded since its limit is zero when $z$ goes to $z_0$. By Riemann's Theorem (\ref{rrs}) on removable singularities, $\phi$ admits a holomorphic extension, still denoted  $\phi$, on the whole disc $\dis{t}{z_0}$ fulfilling $\phi (z_0)=0$.
	
	Lemma \ref{convergenza} ensures that there exists a subsequence $\{h_{n_j}\}_{j\in\N}$ of Riemann branches for $S_{n_j}$ (actually, by remark \ref{remarkino}, the branches $h_{n_j}$ are analytic over $\cD_\rho (0)$ since $S_{n_j}\in\cB$) which converges uniformly to $h$ on the compact subsets of  $\disp{t}{z_0}\subset\cD_\rho (0)\backslash\cN_S$. Consequently, the functions $h_{n_j}$ do not vanish on any compact subset of the disk
	$\cD_t (z_0)$ for $j$ large enough. This ensures that the functions
	$\{\phi _{n_j}\}_{j\in\N}:=\{1/h_{n_j}\}_{j\in\N}$ are holomorphic on $\cD_t (z_0)$. Moreover, the sequence $\{\phi_{n_j}\}_{j\in\N}$ converges locally uniformly to $\phi$ on $\disp{t}{z_0}$. Since both $\phi_{n_j}$ and $\phi$ are holomorphic at $z_0$, by the Maximum Principle, this convergence is actually locally uniform over the whole disc $\dis{t}{z_0}$.
	
	On the one hand, we have $\phi(z_0)=0$ and $\phi(z)\neq 0$ for $z\in \disp{t}{z_0}$, since in this domain $\phi(z)=1/h(z)$ and $h$ is holomorphic on $\disp{t}{z_0}$.
	
	On the other hand, the terms of the subsequence $\{\phi_{n_j}\}_{j\in\N}:=\{1/h_{n_j}\}_{j\in\N}$ are nowhere-vanishing on $\cD_t(z_0)$ and $\phi_{n_j}$ is holomorphic in that domain. Consequently, by Hurwitz's Theorem on sequences of holomorphic functions (\ref{hurwitz}), $\phi$ must be either identically zero or nowhere null on $\dis{t}{z_0}$.
	
	We have obtained a contradiction and therefore $\lim_{z\longrightarrow z_0}h(z)$ is finite.
	By applying once again Riemann's Theorem on removable singularities, $h$ admits an analytic extension $\widetilde h$ to the whole disc $\dis{t}{z_0}$. Moreover, $\widetilde h$ is a Riemann branch of $\overline \cS$ in the whole disc $\dis{t}{z_0}$, since
	$$
	\overline \cS(z_0,\widetilde h(z_0))=\lim_{z\longrightarrow z_0} \overline \cS(z,  h(z))=0\ .
	$$
	
	It remains to rule out the possibility that $z_0$ is singular because the graphs of two distinct branches $h$ and $\ell$ of the limit polynomial ${\overline \cS}$ intersect on it. Assume that 
	$h(z_0)=\ell(z_0)$. By Lemma \ref{convergenza} and by the previous arguments, there exist two subsequences $\{ h_{n_j}\}_{j\in\N}$ and $\{ \ell_{n_j}\}_{j\in\N}$ of branches associated to $\{ {\overline \cS}_{n_j}\}_{j\in\N}$ that approach respectively $h$ and $\ell$ locally uniformly over $\dis{t}{z_0}$.
	
	We first notice that $h_{n_j}$ is distinct from  $\ell_{n_j}$ for
	$j$ large enough, otherwise there exists a subsequence of common branches $h_{n_j}\! =\! \ell_{n_j}$ for ${\overline \cS}_{n_j}$ up to infinity which converges locally uniformly in $\cD_{t}(z_0)$ respectively to $h$ and $\ell$. Consequently $h =\ell$ over $\cD_{t}(z_0)$, which contradicts the assumption that $h$ and $\ell$ are distinct.  Moreover, since $\tR_{{\overline \cS}_{n_j}}$ is composed of distinct regular leaves over $\cD_{t}(z_0)$  for any $j\in\N$, the functions $h_{n_j}\! - \ell_{n_j}$ never vanish over $\cD_t(z_0)$.
	
	Consequently, Hurwitz theorem \ref{hurwitz} ensures that the sequence of holomorphic functions $\{ h_{n_j}\! - \ell_{n_j}\}_{j\in\N}$ converges to a limit which either never vanishes or is identically zero over $\cD_{t}(z_0)$. Here  $\lim_{j\rightarrow +\infty}(h_{n_j}\! - \ell_{n_j})(z_0)=(h-\ell)(z_0)=0$, so $h = \ell$ everywhere over $\cD_{t}(z_0)$, which is again in contradiction with the assumption that $h$ and $\ell$ are distinct.
	
	Therefore, we have proved that the algebraic curve of the limit polynomial ${\overline \cS}$ is composed of disjoint and regular Riemann leaves over a neighbourhood of $z_0$. Since the above arguments hold for any $z_0\in\cN_{S}$, the algebraic curve $\tR_{\overline \cS}$ is composed of distinct leaves over $\cD_\rho(0)$ and the branches of ${\overline \cS}$ are globally holomorphic over $\cD_\rho (0)$, consequently $\cN_{\overline \cS}=\varnothing$ (see Remark \ref{remarkino}).  
\end{proof}

Lemma \ref{panettone} is the cornerstone to prove Lemma \ref{chiusura}.

\begin{proof}{\it (Lemma \ref{chiusura})}
	
 We start by proving the closure of $\cA\cup\{0\}$ in $\Pol$ and consider a sequence $\{S_n\}_{n\in\N}$ in $\cA\cup\{0\}$ which converges to a limit $S\in\Pol$. One has $S\in\cB\cup\{0\}$, since $\cA\cup\{0\}\subset\cB\cup\{0\}$ and $\cB\cup\{0\}$ is closed by Lemma \ref{panettone}. 
	
	By hypothesis, for any fixed $n\in\N$ there exists a Riemann branch $g_n(z)$ which is analytic on $\dis{\rho}{0}$ and satisfies
	\begin{equation}\label{proprieta}
	S_n(z,g_n(z))=0\ ,\ \ g_n(0)=0\ ,\ \ \underset{z\in\cK}{\max}| g_n(z)|=1\ .
	\end{equation}
	
	If $S\equiv0$ there is nothing to prove. 
	
	If $S\not\equiv 0$, we claim that  $\{ g_n\}_{n\in\N}$ has a subsequence that converges uniformly on the compact subsets of $\cD_\rho(0)$ to a branch $g_S$ of $S$ having the good properties.
	
	Infact, since $S\in\cB\cup\{0\}$, the elements of the set $\cN_S$ are the roots of $q(z)=0$. Consequently, $\text{card }\cN_S\le k$ and $\text{card}\,\cK>k$ ensures that there exists $z^\star\in\cK\backslash\cN_S$ such that $\{ g_n(z^\star) \}_{n\in\N}$ is bounded. Up to the extraction of a subsequence, $g_n(z^\star)$ converges to a complex value $w^\star$. Moreover, since $z^\star$ is not an excluded point of $S\in\cB$, we can ensure that $(z^\star,w^\star)$ belongs to a Riemann leaf of $\tR_{\overline \cS}$ which is associated to a holomorphic Riemann branch over $\cD_\rho (0)$, denoted $g_S$.
	
	By the same arguments used in the proofs of Lemmas \ref{sesto} and \ref{convergenza}, the sequence $\{ g_n\}_{n\in\N}$ admits a subsequence which converges uniformly on the compact subsets of $\cD_\rho (0)\backslash\cN_S$ to a Riemann branch $f_S$ associated to $S$. The holomorphy of $f_S$ over $\cD_\rho (0)$ (since $S\in\cB$) and the Maximum Principle imply that the convergence is actually locally uniform on the whole set $\cD_\rho(0)$. Then, by the  unicity of the limit, we have $g_S(z^\star)=f_S(z^\star)$, which implies $g_S\equiv f_S$ over $\cD_\rho(0)$ because $S\in \cB$. This yields $\max_\cK| g_S |=1$ and $g_S(0)=0$, hence $g_S$ meets the requirements of Definition \ref{branch} and $S\in\cA$.
	
Finally, it remains to prove that the function $\mu_\Omega$ in Lemma \ref{chiusura} is continuous. Since  $\{g_{n}\}_{n\in\N}$ converges locally uniformly to $g_S$ in $\cD_\rho(0)$, we can write
	\begin{equation}
	\label{triangolo}
	\lim_{n\longrightarrow +\infty}\left|\,\max_{z\in \cK'}| g_S(z)|-\max_{z\in \cK'}| g_{n}(z)|\,\right|\le \lim_{n\longrightarrow +\infty}\left(\max_{z\in \cK'}| g_S(z)-g_{n}(z)|\right)=0\ ,
	\end{equation}
	for any compact $\cK'\subset \cD_\rho(0)$. By taking $\cK'\equiv \overline\Omega\subset \cD_\rho(0)$, we have
	$$
	\mu_{\Omega}(S):=\max_{z\in \overline\Omega}| g_S(z)|=\lim_{n\longrightarrow+\infty}\Bigg(\max_{z\in \overline\Omega}\left| g_{n}(z)\right|\Bigg)=:\lim_{n\longrightarrow+\infty}\Bigg(\mu_{\Omega}(S_{n})\Bigg)\ ,
	$$
	which implies that $\mu_\Omega$ is continuous. This concludes the proof of Lemma \ref{chiusura}.
	
\end{proof}

\appendix
\section{Proof of Lemma \ref{Riemann leaves}}\label{provetta}
We start by stating two standard results of algebraic geometry.
\begin{lemma}\label{tiodio1}
	For any couple of positive integers $k_1,k_2$ consider two non-zero irreducible, non proportional polynomials $Q_1\in \mathcal P(k_1)$ and $Q_2\in \mathcal P(k_2)$. Then the system $Q_1(z,w)=Q_2(z,w)=0$ has at most $k_1\times k_2$ solutions.
\end{lemma}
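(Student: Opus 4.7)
The classical tool here is the $w$-resultant of $Q_1$ and $Q_2$, viewed as elements of $\C[z][w]$. I would denote $R(z):=\operatorname{Res}_w(Q_1,Q_2)\in\C[z]$ and aim to show that (i) $R$ is not identically zero, and (ii) after a suitable linear change of coordinates, each common zero of $Q_1,Q_2$ in $\C^2$ gives rise to a zero of $R$, with distinct common zeros contributing distinct $z$-coordinates, while $\deg R\leq k_1 k_2$.

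\textbf{Step 1 (non-vanishing of $R$).} Since $\C[z,w]\cong\C[z][w]$ is a unique factorization domain and $Q_1,Q_2$ are irreducible and non-proportional, they share no common factor in $\C[z,w]$. By Gauss's lemma, they also remain coprime when regarded as elements of $\C(z)[w]$. A standard property of the resultant over an integral domain of fractions then implies $R\not\equiv 0$ in $\C[z]$.

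\textbf{Step 2 (normalization via a generic linear change of coordinates).} For a generic $\lambda\in\C$ I would perform the change $(z,w)\mapsto(z,w+\lambda z)$, replacing $Q_i$ with $\widetilde Q_i(z,w):=Q_i(z,w-\lambda z)$. For generic $\lambda$, the polynomial $\widetilde Q_i$ has $\deg_w\widetilde Q_i=k_i$ with a nonzero constant leading coefficient in $w$, and moreover the projection $(z,w)\mapsto z$ becomes injective on the finite set of common zeros (if two distinct common zeros of $Q_1,Q_2$ had the same image under the new projection, this would force $\lambda$ to lie in a finite set of slopes which can be avoided). This normalization is harmless because the number of common zeros is invariant under the change of coordinates.

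\textbf{Step 3 (counting roots of $R$).} Once the leading coefficients in $w$ are constants, any common zero $(z_0,w_0)$ of $\widetilde Q_1,\widetilde Q_2$ satisfies $R(z_0)=0$ by the standard vanishing criterion of the resultant. Writing $\widetilde Q_i(z,w)=\sum_{j=0}^{k_i}a_{i,j}(z)w^j$ with $\deg_z a_{i,j}\leq k_i-j$ (consequence of $\deg\widetilde Q_i\leq k_i$), an inspection of the $(k_1+k_2)\times(k_1+k_2)$ Sylvester matrix shows that $\deg_z R\leq k_1 k_2$. Combined with the injectivity of the $z$-projection from Step 2, this yields at most $k_1 k_2$ common zeros, as claimed.

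\textbf{Main obstacle.} The delicate point is the sharp bound $\deg_z R\leq k_1 k_2$, which requires a careful column- and row-degree count in the Sylvester determinant using the estimate $\deg_z a_{i,j}\leq k_i-j$; a crude estimate would give a worse bound such as $k_1\deg_w Q_2+k_2\deg_w Q_1$. Also one has to justify that a single generic $\lambda$ simultaneously normalizes the $w$-leading coefficients \emph{and} separates the (finitely many) common zeros by their $z$-coordinate, which is routine but should be checked explicitly.
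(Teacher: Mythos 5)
The paper does not prove this lemma itself — it simply cites B\'ezout's theorem (Kendig, Th.\ 3.4a) — so your self-contained resultant argument is a genuinely different and, in spirit, more pedagogical route, which fits the aims of the paper. Step 1 and the degree count in Step 3 are standard and sound. However, Step 2 contains a concrete error that breaks the normalization as written.

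The shear $(z,w)\mapsto(z,w+\lambda z)$, i.e.\ $\widetilde Q_i(z,w):=Q_i(z,w-\lambda z)$, fixes the $z$-coordinate of every point, so it cannot possibly make the $z$-projection injective on the set of common zeros. It also leaves the $w$-leading coefficient of $Q_i$ unchanged: writing $Q_i=\sum_j a_{i,j}(z)\,w^j$, the coefficient of $w^{\deg_w Q_i}$ in $Q_i(z,w-\lambda z)$ is still $a_{i,\deg_w Q_i}(z)$, since only the top term in $w$ contributes. Thus neither of your two normalization goals is achieved. The shear you want acts on $z$, not $w$: set $\widetilde Q_i(z,w):=Q_i(z-\mu w,\,w)$, i.e.\ new coordinates $(z',w')=(z+\mu w,\,w)$. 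Then the coefficient of $w^{d_i}$ in $\widetilde Q_i$, where $d_i=\deg Q_i$, equals $Q_i^{(d_i)}(-\mu,1)$ with $Q_i^{(d_i)}$ the degree-$d_i$ homogeneous part of $Q_i$; this is a nonzero polynomial in $\mu$, hence nonzero for all but finitely many $\mu$. Moreover two distinct common zeros $(z_1,w_1)\ne(z_2,w_2)$ acquire the same new $z'$-coordinate only if $\mu=-(z_1-z_2)/(w_1-w_2)$, a finite set of slopes to avoid. With this corrected shear the rest of your argument goes through; one small further point is that the Sylvester matrix has size $(d_1+d_2)\times(d_1+d_2)$ with $d_i=\deg Q_i\le k_i$ rather than $(k_1+k_2)\times(k_1+k_2)$, which is harmless since the resulting bound $\deg_z R\le d_1 d_2\le k_1 k_2$ is what the lemma asserts.
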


\begin{lemma}\label{tiodio2}
	For $k\ge2$, let $Q(z,w)\in \Pol$ be an irreducible polynomial. Then
	$$
	\text{card}\ \{z\in\C\ | \ \exists w\in\C: Q(z,w)=\d_w Q(z,w)=0\}\le k\ .
	$$
\end{lemma}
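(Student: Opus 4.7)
The plan is to reduce the statement to a univariate zero-counting problem via the resultant $R(z) := \mathrm{Res}_w(Q, \partial_w Q) \in \C[z]$, obtained by viewing $Q$ and $\partial_w Q$ as polynomials in $w$ with coefficients in $\C[z]$. First I dispatch the degenerate case $\partial_w Q \equiv 0$: then $Q$ lies in $\C[z]$, and irreducibility in $\C[z,w]$ forces $Q$ to be, up to a nonzero scalar, of the form $z - z_0$ for some $z_0 \in \C$, so the set under consideration reduces to $\{z_0\}$, of cardinality $1 \le k$. From now on I assume $\partial_w Q \not\equiv 0$, set $d_w := \deg_w Q \ge 1$, and write $Q(z,w) = \sum_{j=0}^{d_w} a_j(z) w^j$ with $\deg_z a_j \le k - j$.

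Since $\deg_w \partial_w Q = d_w - 1 < d_w$ and $Q$ is irreducible in $\C[z,w]$, the polynomial $Q$ shares no non-constant factor with $\partial_w Q$; this follows either directly from the irreducibility of $Q$ and a $w$-degree comparison, or by applying Lemma \ref{tiodio1} to $Q$ together with each irreducible factor of $\partial_w Q$. Hence $R(z)$ is a nonzero element of $\C[z]$. The standard characterization of the resultant then shows that for any $z_0 \in \C$ with $a_{d_w}(z_0) \ne 0$, the univariate polynomials $Q(z_0, \cdot)$ and $\partial_w Q(z_0, \cdot)$ admit a common $w$-root if and only if $R(z_0) = 0$. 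Consequently the set in the statement is contained in $\{z \in \C : R(z) = 0\} \cup \{z \in \C : a_{d_w}(z) = 0\}$, whose cardinality is at most $\deg_z R + \deg a_{d_w}$.

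The remaining task is the degree estimate. One has immediately $\deg a_{d_w} \le k - d_w$. The bound on $\deg_z R$ is extracted from the Sylvester determinant defining $R$: each entry of the $(2d_w-1) \times (2d_w-1)$ Sylvester matrix is either zero, an $a_j(z)$, or $j\,a_j(z)$, of $z$-degree at most $k - j$, and a weighted-degree accounting along the shifted-diagonal structure of the matrix produces a bound that is polynomial in $k$. I expect the combinatorial degree count to be the main obstacle: a naive Bezout-type estimate obtained by summing the Lemma \ref{tiodio1} bound over the irreducible factors of $\partial_w Q$ yields a weaker quadratic bound of the form $k(k-1)$, and sharpening this to the constant claimed in the statement requires exploiting the fact that several distinct common zeros $(z_0, w_0^{(i)})$ of $Q$ and $\partial_w Q$ typically share the same $z$-coordinate $z_0$, as they correspond to the various multiple $w$-roots of $Q(z_0, \cdot)$, so that a single critical value $z_0$ accounts for several intersection points in $\C^2$.
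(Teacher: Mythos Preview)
The paper does not prove this lemma; it merely cites Proposition~1 in Narasimhan's \textit{Compact Riemann Surfaces}. Your discriminant/resultant approach is the standard one and is essentially what one finds there, so methodologically you are on the same track as the intended reference.

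The difficulty you raise at the end, however, is not a gap in your reasoning but a defect in the stated bound: the inequality with $k$ on the right-hand side is false in general. Take $Q(z,w)=w^{3}+w+z^{3}\in\mathcal{P}(3)$. This is irreducible: a factor linear in $w$ would give a polynomial root $w=\alpha(z)$ with $\alpha^{3}+\alpha+z^{3}=0$; comparing degrees forces $\alpha=az+b$, then the $z^{2}$--coefficient gives $b=0$ and the $z$--coefficient gives $a=0$, contradicting $a^{3}=-1$. Here $\partial_{w}Q=3w^{2}+1$, so the common zeros have $w_{0}=\pm i/\sqrt{3}$ and $z_{0}^{3}=-\tfrac{2}{3}w_{0}=\mp\,2i/(3\sqrt{3})$, yielding six distinct values of $z_{0}$, not three. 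More conceptually, for a generic smooth plane curve of degree $k$ the projection to the $z$--axis has $k(k-1)$ simple branch points.

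Your argument, completed in the obvious way, delivers exactly the correct bound $k(k-1)$: one has $\deg_{z}R\le k(k-1)$ (by homogenizing and using that the $w$--resultant of forms of degrees $k$ and $k-1$ is a form of degree $k(k-1)$, or simply by the B\'ezout count you already describe), and the auxiliary set $\{a_{d_{w}}=0\}$ is already contained in $\{R=0\}$ since at any such $z_{0}$ the first column of the Sylvester matrix vanishes. There is no way to sharpen this to $k$, so you can stop looking for one. For the paper's purposes the discrepancy is immaterial: Lemma~\ref{tiodio2} is invoked only to show that $\text{card}\,\cN_{S}$ admits \emph{some} bound $\tN_{k}$ depending on $k$ alone (see the end of the proof of Lemma~\ref{Riemann leaves}), and $k(k-1)$ serves just as well as $k$.
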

The first Lemma is a simple corollary of Bézout Theorem (see e.g. \cite{Kendig_2012}, Th. 3.4a), while the second Lemma is also known (see e.g. Proposition 1 and its proof in \cite{Narasimhan_1991}).

With these tools, we can now give the proof of Lemma \ref{Riemann leaves}.
\begin{proof}
	The lemma is trivial if $S$ depends only on $w$ since we have $\cN_S=\varnothing$ in this case because $\tR_S$ is composed of a finite number of Riemann branches which are horizontal lines over the $z$-axis.
	
	If $S\in\Pol$ depends only on $z$, then $\tR_S=\{(z,w)\in\C^2: z=z_0, \text{ with } S(z_0)=0\}$ and the thesis holds true since there are  only vertical lines at the distinct roots of $S$ (whose number is bounded by $k$) and no Riemann branches.
	
	Let's now examine the case in which $S$ depends on both variables where, up to multiplication by constant factors, any polynomial $S\in\Pol$ can be factorized uniquely as
	\begin{equation}\label{decomposition}
		S(z,w)=q(z)\,  \displaystyle{\Pi_{i=1}^m} ( S_i(z,w))^{j_i}
	\end{equation}
	for some $1\le j_i\le k$, $1\le m\le k$ and the ${S}_i$ are non-constant, irreducible, mutually non-proportional polynomials.
	
	We denote
	\begin{equation}\label{prime}
		\overline \cS(z,w)=\displaystyle{\Pi_{i=1}^m} ( S_i(z,w))^{j_i}\ {\rm and}\ \widetilde \cS(z,w)=\displaystyle{\Pi_{i=1}^m} S_i(z,w)
	\end{equation}
	and $\overline \cS^z(w):=\overline \cS(z,w)$, $\widetilde \cS^z(w):=\widetilde \cS(z,w)$ hence $\widetilde \cS^z\in \C [z][w]$ - with $\deg (\widetilde \cS^z)=\ell$ $\ell\in\{1,\ldots ,k\}$ - and $a_\ell (z)$ is the corresponding leading coefficient.
	
	We notice that decomposition \ref{decomposition} and definition \ref{prime} ensure that $\tR_S$ is the union of the vertical lines $z=z^*$ with $q(z^*)=0$ and of the Riemann surface $\tR_{\overline \cS}$, moreover the Riemann surfaces $\tR_{\overline \cS}$ and $\tR_{\widetilde \cS}$ are identical.
	
	\begin{defn}[Excluded points]\label{esclusi}
		Taking decomposition \ref{decomposition} into account, we define $\cN_S\subset\C$ as the set of those points $z_0\in\C$ that satisfy at least one of the following conditions
		\begin{enumerate}
			\item $$
			q(z_0)=0\qquad \text{(Vertical lines)}
			$$
			\item There exists $w_0\in\C$ such that for some $i\in\{1,...,m\}$
			$$
			\begin{cases}
				S_i(z_0,w_0)=0\\
				\d_w S_i(z_0,w_0)=0\\
			\end{cases}
			\qquad \text{(Ramification points)}
			$$
			\item There exists $w_0\in\C$ such that for some $i,j\in\{1,...,m\}, i\neq j$
			$$
			\begin{cases}
				 S_i(z_0,w_0)=0\\
				  S_j(z_0,w_0)=0\\
			\end{cases}
			\qquad \text{(Intersection of graphs)}
			$$
			\item $$
			a_\ell (z_0)=0\qquad \text{(Poles)}
			$$
		\end{enumerate}
	\end{defn}
	Henceforth, we prove that over $\C\backslash \cN_S$ the conclusions of Lemma \ref{Riemann leaves} are valid and that we can choose the set $\cN_S$ as the excluded points for $S$.
	
	To see this, we fix a point $z^*\in \C\backslash \cN_S$.
	
	By negation of condition (1), we have $q(z)\neq 0$ in the vicinity of $z^*$, hence the vertical lines are excluded from the algebraic curve $\tR_S$ in the vicinity of $z^*$.
	
	Then, we also notice that for any value $w^*$ such that $\overline \cS^{z^*}(w^*)=0$, by decomposition \eqref{decomposition} and negation of condition (3), one must have $ S_i(z^*,w^*)=0$, for exactly one $i\in\{1,...,m\}$. Hence, by negation of condition (2) at $(z^*,w^*)$, we can apply the implicit function theorem and there exists an open neighbourhood $V$ around $(z^*,w^*)$ such that $\tR_S\cap V=\tR_{\overline \cS}\cap V=\tR_{\widetilde \cS}\cap V=\tR_{S_i}\cap V$ is the graph of an unique holomorphic function.
	
	Finally, the negation of condition (4) in a neighbourhood of $z^*$ ensures that the polynomial $\widetilde \cS^{z}$ admits $\ell\leq k$ complex roots counted with multiplicity for $z$ in the vicinity of $z^*$. With $z^*\in \C\backslash \cN_S$, a direct computation ensures that the discriminant of $\widetilde \cS^{z^*}$ is non zero since $\widetilde \cS^{z^*}$ and its derivative cannot have common roots, hence $\widetilde \cS^{z}$ admits simple roots for $z$ in the vicinity of $z^*$.
	
	This implies the existence of a neighbourhood $V$ of $z^*\in \C\backslash \cN_S$ such that the algebraic curve $\tR_S\cap V\times\C=\tR_{\widetilde \cS}\cap V\times\C$ is the union of exactly $\ell\leq k$ disjoint graphs of holomorphic branches.
	
	Hence, over a simply connected domain $\mathtt D\subset\C\backslash \cN_S$, branch cuts can be avoided and the Riemann surface $\mathtt R_S$ is the finite union of at most $k$ disjoint graphs of holomorphic functions.
	
	\medskip
	
	Conversely, consider a simply connected complex domain  $\mathtt D$ such that $\mathtt R_S\cap \mathtt D\times\C$ is the finite union of $\ell\in\{ 1,\ldots ,k\}$ disjoint graphs of functions $h_1(z),\ldots ,h_\ell (z)$ holomorphic over $\mathtt D$.
	
	For a fixed point $z^*\in\mathtt D$, the polynomial $S^{z^*}$ admits $\ell$ roots, and decomposition \ref{decomposition} ensures that we have $q(z^*)\not= 0$. Moreover,  the discriminant of ${\widetilde S}^z$ might be zero only at a finite number of points (since the discriminant is itself a polynomial) but we always have $\ell$ roots for ${\widetilde S}^z$ with $z\in\mathtt{D}$ as a consequence of our assumption that the Riemann leaves are distinct. Hence, the roots are simple and the degree of ${\widetilde S}^z$ is constant equal to $\ell$ for all $z\in\mathtt D$. Consequently, the discriminant of $\widetilde \cS^z$ is non-zero and $a_\ell (z)\not= 0$ for all $z\in\mathtt D$.
	
	Moreover, for any $i,j\in\{1,...,m\}, i\neq j$ and for any $w\in\C$, we have either $S_i(z^*,w)\not= 0$ or $S_j(z^*,w)\not= 0$ otherwise two distincts Riemann leaves associated respectively to $S_i$ and $S_j$ would intersect. 
	
	Finally, we have the decomposition $\widetilde \cS(z,w)=a_\ell(z)(w-h_1(z))\ldots (w-h_\ell (z))$
	for $(z,w)\in\mathtt D\times\C$, and for $z\in\mathtt D$ we can check that $S^z$ and its derivative cannot have common roots under our assumptions. Hence, for any $w\in\C$ and any $i\in\{1,...,m\}$, we have either $S_i(z^*,w)\not= 0$ or $\d_w S_i(z^*,w)\not= 0$.
	
	\medskip
	
	Then, we prove that the cardinality of $\cN_S$ is bounded by a quantity depending only on $k$. 
	
	 Conditions (1) and (4) are polynomial equations of degree less or equal to $k$ ($q$ factorizes all the terms in $z$ and $a_\ell (z)$ is the coefficient of the term of highest degree in $w$), hence they have at most $k$ solutions. By Lemma \ref{tiodio1}, condition (2) is satisfied at most at $k$ points. Since the index $i$ in (2) can assume at most $k$ values, this condition yields $k^2$ singularities. In the same way, Lemma \ref{tiodio2} says that condition (3) is satisfied at most at $k^2$ points. Since the indices $i,j$ in condition (3) can take each at most $k$ values and $i\neq j$, we get $k^2\binom{k}{2}$ solutions. The sum of the previous estimates yields a bound depending only on $k$.
	\end{proof}

\section{Tools of complex analysis}
$\cO(\cD)$ denotes the set of holomorphic functions over an open domain $\cD\subset\C$.
\begin{defn}[Normal families]
	A family $\cF\subset\cO(\cD)$ is said to be normal iff it is precompact, that is if from any sequence in $\cF$, one can extract a subsequence converging uniformly on the compact subsets of $\cD$ to a function in $\cO(\cD)$.
\end{defn}
\begin{defn}[Locally bounded families]\label{bounded families}
	A family $\cF\subset\cO(\cD)$ is said to be locally bounded if for any compact set $\cK\subset \cD$, there exists a constant $C_\cK\ge 0$ such that $\max_\cK|f|\le C_\cK$ for all $f\in\cF$.
\end{defn}
A criterion for establishing if a given family $\cF$ is normal is Montel theorem, which is a version of Ascoli-Arzela theorem for sets of holomorphic functions
\begin{thm}[Montel, see for example \cite{Taylor_2019}, Prop. 21.3]\phantom{1}
	
	A family $\cF\subset \cO(\cD)$ is locally bounded if and only if it is normal.
\end{thm}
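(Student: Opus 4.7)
The plan is to prove the two implications separately. The forward direction (normal implies locally bounded) is essentially immediate by contradiction: if there were a compact $\cK\subset\cD$ on which $\cF$ fails to be uniformly bounded, one could choose a sequence $\{f_n\}\subset\cF$ with $\max_\cK|f_n|\to+\infty$. Any subsequence extracted by normality would converge uniformly on $\cK$ to some continuous limit, and would therefore be uniformly bounded on $\cK$, a contradiction. So the substance of the theorem lies in the reverse direction.

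For the reverse direction, the strategy is standard. Fix a compact set $\cK\subset\cD$ and choose $r>0$ small enough that $\cK':=\{z\in\C:\operatorname{dist}(z,\cK)\le 2r\}$ is still a compact subset of $\cD$. By local boundedness there exists $M>0$ with $|f|\le M$ on $\cK'$ for every $f\in\cF$. For $z,z'\in\cK$ with $|z-z'|<r$, Cauchy's integral formula on the circle of radius $2r$ centered at $z$ yields an estimate of the form
$$
|f(z)-f(z')|\le \frac{2M}{r}\,|z-z'|,
$$
so the restrictions $\cF|_\cK$ form an equicontinuous and pointwise bounded family. By the Arzelà-Ascoli theorem, $\cF|_\cK$ is precompact in the uniform topology on $\cK$. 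Now exhaust $\cD$ by a nested sequence of compacts $\cK_1\subset\cK_2\subset\cdots$ with $\bigcup_j\cK_j=\cD$, and apply a diagonal argument: given any sequence in $\cF$, extract successive subsequences converging uniformly on each $\cK_j$, and then pass to the diagonal subsequence to obtain uniform convergence on every compact subset of $\cD$, to some continuous limit $f$.

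To finish, it remains to check that the limit $f$ lies in $\cO(\cD)$. This is Weierstrass's theorem on holomorphic convergence: since the approximating functions $f_n$ are holomorphic, the vanishing of $\oint_\Gamma f_n\,dz$ for every closed contour $\Gamma$ in a disk $\dis{r}{z_0}\subset\cD$ passes to the limit thanks to the uniform convergence on $\Gamma$, and Morera's theorem then yields holomorphy of $f$ on that disk, hence on all of $\cD$. The main obstacle is the equicontinuity estimate: the delicate point is to choose the auxiliary thickening $\cK'$ so that Cauchy's formula applies with a uniform margin across all of $\cK$, from which the Lipschitz-type bound and hence precompactness follow mechanically.
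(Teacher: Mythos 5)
The paper states Montel's theorem as a cited textbook result (Taylor, Prop.\ 21.3) and does not give a proof of its own. Your argument is the standard textbook proof of exactly that result — the Cauchy integral estimate on a thickened compact gives a Lipschitz-type bound and hence equicontinuity, Arzel\`a--Ascoli plus exhaustion and a diagonal extraction give precompactness in the locally uniform topology, and Morera/Weierstrass gives holomorphy of the limit — and it is correct (the easy direction by contradiction is also fine, since locally uniform convergence forces uniform boundedness on each compact).
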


From this theorem comes an important
\begin{cor}\label{montel}
	If $\{f_n\}_{n\in\N}$ is a locally bounded sequence of holomorphic functions over $\cD$, then one can extract a subsequence that converges uniformly to a holomorphic function $f$ on all the compact subsets of $\cD$.
\end{cor}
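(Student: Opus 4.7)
The plan is to derive this corollary directly from Montel's Theorem stated immediately above. The key observation is that the notion of a normal family, as defined in the excerpt, already encodes precisely the extraction of a uniformly convergent subsequence on compact subsets with limit in $\cO(\cD)$. Thus the work reduces to connecting the hypothesis on the sequence $\{f_n\}_{n\in\N}$ to the hypothesis on families appearing in Montel's Theorem.

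First I would form the family $\cF := \{f_n : n\in\N\} \subset \cO(\cD)$ obtained by collecting the terms of the sequence. By hypothesis, for each compact $\cK\subset\cD$, there is a constant $C_\cK\geq 0$ with $\max_\cK|f_n| \le C_\cK$ for every $n\in\N$, which is precisely the condition that $\cF$ is locally bounded in the sense of Definition \ref{bounded families}. Montel's Theorem then yields that $\cF$ is a normal family.

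By the definition of normality given in the excerpt, precompactness means that from any sequence taken in $\cF$, one can extract a subsequence converging uniformly on the compact subsets of $\cD$ to an element of $\cO(\cD)$. Applying this property to the sequence $\{f_n\}_{n\in\N}$ itself produces a subsequence $\{f_{n_j}\}_{j\in\N}$ and a holomorphic function $f\in\cO(\cD)$ such that $f_{n_j}\to f$ uniformly on every compact subset of $\cD$, which is exactly the conclusion of the corollary.

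There is essentially no obstacle here: the argument amounts to unwinding the definitions of locally bounded family and of normal family, and invoking Montel's Theorem as a black box. The only minor point of care is recognizing that the sequence $\{f_n\}_{n\in\N}$ qualifies as a ``sequence in $\cF$'' in the sense of the normality definition, which is immediate since each $f_n$ belongs to $\cF$ by construction.
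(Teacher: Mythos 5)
Your proof is correct and is exactly the intended route: the paper presents this as an immediate consequence of Montel's Theorem with no separate argument, and your unwinding of the definitions of locally bounded and normal families, followed by applying normality to the sequence itself, is precisely that.
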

%We also state the following Theorem, which turns out to be particularly useful and whose proof can be found in \cite{Remmert_1991} (Theorem 4, page 150)
%\begin{thm}[Vitali]\label{vitali}
% Let $\{f_n\}_{n\in\N}$, $f_n\in\cO(\cD)$ for any $n\in\N$, a locally bounded sequence. Suppose that the set
% $$
% \{z\in \cD: \lim_{n\longrightarrow+\infty}f_n(z)\text{ exists and is finite}\}
% $$
% has at least one accumulation point in $\cD$. Then the sequence converges uniformly on all compact subsets of $\cD$.
%\end{thm}
%From this result and Theorem \ref{limite_olomorfo} comes the following
%\begin{cor}
% In Theorem \ref{vitali}, the limit function is holomorphic on $\cD$.  
%\end{cor}

Hereafter, we state some further classic results of complex analysis.

\begin{thm}[Laurent series, \cite{Taylor_2019} p. 130]\label{Laurent}\phantom{1}
	
	Take two numbers $0<r_1<r_2$, $z_0\in\C$ and $f\in \cO(\tA_{r_1,r_2}(z_0))$ where $\tA_{r_1,r_2}$ is the annulus of radii $r_1$ and $r_2$ around $z_0$, then
	$$
	f(z)=\sum_{k=-\infty}^{+\infty}a_k (z-z_0)^k\ ,
	$$
	where the convergence is uniform over the compact subsets of $\tA_{r_1,r_2}(z_0)$.
	
\end{thm}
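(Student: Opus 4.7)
The plan is to use the Cauchy integral formula on a contour that splits the annulus into two pieces, then expand the Cauchy kernel in a geometric series on each piece. First, I fix $z$ in the annulus $\tA_{r_1,r_2}(z_0)$ and choose radii $r_1 < \rho_1 < |z-z_0| < \rho_2 < r_2$. Applying Cauchy's integral formula to the closed region bounded by $|w-z_0|=\rho_1$ and $|w-z_0|=\rho_2$ (with opposite orientations), one obtains
$$
f(z)=\frac{1}{2\pi i}\int_{|w-z_0|=\rho_2}\frac{f(w)}{w-z}\,dw-\frac{1}{2\pi i}\int_{|w-z_0|=\rho_1}\frac{f(w)}{w-z}\,dw.
$$

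The next step is to expand the kernel on each circle. On the outer circle, since $|w-z_0|=\rho_2>|z-z_0|$, I write
$$
\frac{1}{w-z}=\frac{1}{(w-z_0)\bigl(1-\tfrac{z-z_0}{w-z_0}\bigr)}=\sum_{k\ge 0}\frac{(z-z_0)^k}{(w-z_0)^{k+1}},
$$
a geometric series that converges uniformly in $w$ on that circle since the ratio has modulus $|z-z_0|/\rho_2<1$. On the inner circle, $|w-z_0|=\rho_1<|z-z_0|$, so I write
$$
\frac{1}{w-z}=-\frac{1}{(z-z_0)\bigl(1-\tfrac{w-z_0}{z-z_0}\bigr)}=-\sum_{k\ge 0}\frac{(w-z_0)^k}{(z-z_0)^{k+1}},
$$
again uniformly in $w$. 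Interchanging sum and integral (justified by the uniform convergence and boundedness of $f$ on the circles) and relabeling the index of the inner sum as $k\mapsto -k-1$, the two pieces combine into
$$
f(z)=\sum_{k\in\Z}a_k\,(z-z_0)^k,\qquad a_k=\frac{1}{2\pi i}\int_{|w-z_0|=\rho}\frac{f(w)}{(w-z_0)^{k+1}}\,dw,
$$
where $\rho\in(r_1,r_2)$ can be taken as either $\rho_1$ or $\rho_2$ (and in fact any $\rho\in(r_1,r_2)$, by Cauchy's theorem, since the integrand is holomorphic in $w$ on the whole annulus). This also shows that $a_k$ does not depend on the choice of $\rho_1,\rho_2$.

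Finally, to establish uniform convergence on compacts, fix any compact $K\subset \tA_{r_1,r_2}(z_0)$ and choose $r_1<\rho_1'<\rho_1''<\rho_2''<\rho_2'<r_2$ with $K\subset\{\rho_1''\le|z-z_0|\le \rho_2''\}$. Using the integral representations with $\rho_1=\rho_1'$ and $\rho_2=\rho_2'$, the estimate $|a_k|\le M\rho_2'^{-k}$ for $k\ge 0$ and $|a_k|\le M\rho_1'^{-k}$ for $k<0$ (with $M$ a uniform bound for $|f|$ on the two circles) implies that the series is dominated term-wise on $K$ by a convergent geometric series of the form $\sum_{k\ge 0}(\rho_2''/\rho_2')^k+\sum_{k\ge 0}(\rho_1'/\rho_1'')^k$, whence the Weierstrass M-test yields uniform convergence on $K$. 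The only mildly subtle point is the justification of the term-by-term integration, which is routine given the uniform geometric convergence of the kernel expansions on each circle.
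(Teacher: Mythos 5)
Your proof is correct and is the standard textbook derivation of Laurent's theorem: Cauchy's formula on the two bounding circles of a subannulus, geometric expansion of the kernel on each circle, term-by-term integration, and a Weierstrass M-test using the coefficient bounds $|a_k|\le M\rho^{-k}$ to get uniform convergence on compacts. The paper does not supply its own proof of this statement; it is stated in the appendix as a classical fact with a citation to a textbook (which gives essentially this same argument), so there is nothing to compare beyond noting your proof is sound and complete.
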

\begin{cor}[Laurent classification of singularities, \cite{Taylor_2019}]\label{classificazione}\phantom{1}
	
	Let $z_0\in\C$ be a singularity of a holomorphic function $f$ and consider the Laurent series $f(z)=\sum_{k=-\infty}^{+\infty}a_k (z-z_0)^k$, then
	\begin{itemize}
		\item $z=z_0$ is a removable singularity iff $a_k=0$ for any $k\le -1$.
		\item given $m\in\N$, $z=z_0$ is a pole of order $m$ iff $a_{-m}\neq 0$ and $a_k=0$ for any $k\le -m-1$.
		\item $z=z_0$ is an essential singularity iff $a_k\neq 0$ for infinitely many negative integers $k$.
	\end{itemize}
\end{cor}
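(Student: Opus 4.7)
The plan is to deduce the corollary directly from Laurent's Theorem \ref{Laurent}, invoking the uniqueness of the Laurent coefficients to translate the analytic characterization of each type of singularity into an algebraic condition on the $a_k$'s. The isolated singularity $z_0$ means $f\in\cO(\tA_{0,r}(z_0))$ for some $r>0$, so Theorem \ref{Laurent} applies and yields the two-sided expansion $f(z)=\sum_{k\in\Z}a_k(z-z_0)^k$ uniformly on compact subsets of the punctured disc. I recall the standard analytic definitions: $z_0$ is removable if $\lim_{z\to z_0}f(z)$ exists in $\C$, a pole of order $m$ if $\lim_{z\to z_0}(z-z_0)^mf(z)$ exists and is a nonzero complex number, and essential otherwise.

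For the removable case, the forward direction is immediate: if $a_k=0$ for all $k\le -1$, the series becomes an ordinary power series that converges on $\cD_r(z_0)$ and defines a holomorphic extension. Conversely, if $z_0$ is removable, $f$ admits a holomorphic extension $\widetilde f$ to $\cD_r(z_0)$, and comparing its Taylor series at $z_0$ with the Laurent series of $f$ forces $a_k=0$ for $k\le -1$ by uniqueness of the coefficients in $\tA_{0,r}(z_0)$.

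For the pole of order $m$, the key observation is that the condition $a_{-m}\ne 0$ and $a_k=0$ for $k\le -m-1$ is equivalent to the function $g(z):=(z-z_0)^mf(z)=\sum_{j\ge 0}a_{j-m}(z-z_0)^j$ admitting a holomorphic extension to $\cD_r(z_0)$ with nonzero value $g(z_0)=a_{-m}$. By the already established removable-singularity characterization applied to $g$ (together with the fact that multiplication by $(z-z_0)^m$ permutes Laurent coefficients by an index shift), this matches exactly the analytic definition of a pole of order $m$.

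Finally, the essential case is obtained by elimination: the three analytic types partition the possible isolated singularities, and the three algebraic conditions on the principal part of the Laurent series (all coefficients vanish, only finitely many nonzero, infinitely many nonzero) partition the possibilities for the sequence $(a_k)_{k<0}$. Since the first two analytic types have been matched with the first two algebraic ones, the third must correspond to the third. No step is genuinely difficult here; the only care needed is in invoking uniqueness of Laurent coefficients, which is itself a consequence of the integral formulas for $a_k$ established inside the proof of Theorem \ref{Laurent}.
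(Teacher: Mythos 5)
Your proof is correct. The paper does not actually prove this corollary---it is stated in Appendix B as a classical fact with a citation to Taylor's textbook---so there is no in-paper argument to compare against. Your derivation is the standard one: Laurent's Theorem \ref{Laurent} plus uniqueness of the Laurent coefficients handles the removable case, the index-shift reduction of the pole case to the removable case applied to $(z-z_0)^m f(z)$ is sound, and the elimination step for the essential case is valid because the three analytic types (removable, pole of some order $m\ge 1$, essential) and the three algebraic conditions on the principal part (no nonzero negative coefficients, finitely many with lowest index $-m$, infinitely many) each form a partition.
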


\begin{thm}[Casorati-Weierstrass Theorem, \cite{Taylor_2019} p.127]\label{casorati}\phantom{1}
	
	Let $z_0\in\C$ be an isolated essential
	singularity of a function $f\in \cO (\Omega \backslash\{ z_0\})$, then for every neighborhood $V$ of $z_0$ in $\Omega$, the image of $V \backslash\{ z_0\}$ under $f$ is dense in $\C$.
\end{thm}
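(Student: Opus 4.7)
The plan is to argue by contradiction using Riemann's theorem on removable singularities, which is presumably stated in the appendix as \ref{rrs}. Suppose, for the sake of contradiction, that there exists a neighborhood $V$ of $z_0$ in $\Omega$ such that $f(V\setminus\{z_0\})$ is not dense in $\C$. Then one can find $w_0\in\C$ and $\eps>0$ with $|f(z)-w_0|\ge\eps$ for every $z\in V\setminus\{z_0\}$. This non-vanishing lower bound is exactly what will allow us to turn the essential singularity into a controllable one by inversion.

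Next, I would consider the auxiliary function
$$
g(z):=\frac{1}{f(z)-w_0},\qquad z\in V\setminus\{z_0\}.
$$
Since $f\in\cO(\Omega\setminus\{z_0\})$ and $f(z)-w_0$ never vanishes on $V\setminus\{z_0\}$, the function $g$ is holomorphic on $V\setminus\{z_0\}$ and satisfies $|g|\le 1/\eps$ there. By Riemann's theorem on removable singularities (\ref{rrs}), the singularity at $z_0$ is removable and $g$ extends to a holomorphic function $\wtg$ on the whole of $V$.

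Then I would split into two cases according to the value $\wtg(z_0)$. If $\wtg(z_0)\neq 0$, then $f(z)=w_0+1/\wtg(z)$ extends holomorphically through $z_0$, so $z_0$ is a removable singularity of $f$, contradicting the hypothesis. If instead $\wtg(z_0)=0$, note that $\wtg\not\equiv 0$ on $V$ (otherwise $f=w_0+1/\wtg$ would not be defined on $V\setminus\{z_0\}$), hence $z_0$ is a zero of finite order $m\ge 1$, and we can write $\wtg(z)=(z-z_0)^m h(z)$ with $h\in\cO(V)$ and $h(z_0)\neq 0$. Consequently
$$
f(z)=w_0+\frac{1}{(z-z_0)^m\,h(z)},
$$
showing that $z_0$ is a pole of order $m$ for $f$. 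This again contradicts the hypothesis that $z_0$ is an essential singularity, and in view of the Laurent classification (Corollary \ref{classificazione}) exhausts the possible cases.

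Since both alternatives lead to a contradiction, the initial assumption was false and $f(V\setminus\{z_0\})$ must be dense in $\C$ for every neighborhood $V$ of $z_0$. There is no genuine obstacle in this argument: the entire content is encapsulated in the dichotomy \emph{bounded near $z_0$ vs.\ blows up at $z_0$} provided by Riemann's theorem, applied not to $f$ itself (which need not be bounded) but to the inverted function $1/(f-w_0)$ made bounded by the density failure assumption.
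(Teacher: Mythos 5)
Your proof is correct and is the standard textbook argument for Casorati--Weierstrass (assume non-density, invert to get a bounded function near $z_0$, apply Riemann's removable-singularity theorem, then conclude $z_0$ would have to be removable or a pole for $f$). The paper itself does not supply a proof of this theorem but merely cites it from \cite{Taylor_2019}, so there is no in-paper argument to compare against; your reasoning matches the classical proof one would find in the cited reference.
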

\begin{thm}[Riemann's on removable singularities, \cite{Taylor_2019} p.126]\label{rrs}\phantom{1}
	
	Take $r>0$, $z_0\in\C$ and $f\in \cO(\disp{r}{z_0})$ with $f$ bounded.
	
	Then
	
	\begin{itemize}
		\item $\lim_{z\longrightarrow z_0}f(z)$ exists and is finite;
		\item the function $\tilde f:\dis{r}{z_0}\longrightarrow \C$ ,
		$$
		\tilde{f}:=
		\begin{cases}
		f(z)\qquad &\text{  if } z\in\disp{r}{z_0}\\
		\lim_{z\longrightarrow z_0}f(z)\qquad &\text{  if } z=z_0\\
		\end{cases}
		$$
		is holomorphic.
	\end{itemize}
	
\end{thm}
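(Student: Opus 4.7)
The plan is to damp the potential singularity at $z_0$ by a suitable vanishing factor, show that the damped function extends holomorphically across $z_0$, and then recover a holomorphic extension of $f$ itself from the resulting Taylor expansion.

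Concretely, I would introduce $g(z) := (z-z_0)^2 f(z)$ on the punctured disk $\disp{r}{z_0}$ and set $g(z_0) := 0$. Since $f$ is holomorphic away from $z_0$ and bounded by some constant $M>0$ on $\disp{r}{z_0}$ by hypothesis, the Newton quotient $(g(z)-g(z_0))/(z-z_0) = (z-z_0)f(z)$ has modulus at most $M|z-z_0|$ and therefore tends to zero as $z\to z_0$. This yields complex-differentiability of $g$ at $z_0$, with $g'(z_0) = 0$, and combined with holomorphy on the punctured disk produces a function holomorphic on all of $\dis{r}{z_0}$.

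Expanding $g$ in its Taylor series at $z_0$ and using $g(z_0) = g'(z_0) = 0$, the expansion begins at order two; one can therefore factor $g(z) = (z-z_0)^2 \tilde f(z)$, where the Taylor series defining $\tilde f$ converges on all of $\dis{r}{z_0}$, so that $\tilde f \in \cO(\dis{r}{z_0})$. On $\disp{r}{z_0}$ one has $\tilde f(z) = g(z)/(z-z_0)^2 = f(z)$, and continuity of $\tilde f$ at $z_0$ immediately gives $\lim_{z\to z_0} f(z) = \tilde f(z_0) \in \C$, which establishes both conclusions of the theorem. The only substantive step is the first one: it is precisely the boundedness assumption that prevents $g$ from retaining any singular behaviour at $z_0$, since without it the Newton quotient computation fails. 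Everything else reduces to a routine Taylor manipulation on a disk.
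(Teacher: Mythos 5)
The paper does not supply a proof of this statement: it is recorded as a textbook citation (Taylor, 2019, p.~126) in an appendix of classical background material, so there is no in-paper argument to compare against. Your proof is correct and is the standard Ahlfors-style argument: introduce $g(z)=(z-z_0)^2 f(z)$ with $g(z_0)=0$, use boundedness of $f$ to show $g$ is complex-differentiable at $z_0$ with $g'(z_0)=0$, conclude that $g$ is holomorphic on the full disk $\dis{r}{z_0}$ (complex-differentiability at every point of an open set gives holomorphy, by Goursat's theorem or, in many treatments, by the very definition), and then peel off $(z-z_0)^2$ from the Taylor expansion of $g$. The damping factor $(z-z_0)^2$ rather than $(z-z_0)$ is exactly the right choice because it simultaneously forces $g(z_0)=0$ and $g'(z_0)=0$, so the Taylor series of $g$ starts at order two and the quotient $\tilde f = g/(z-z_0)^2$ is holomorphic across $z_0$; you correctly identify this, together with boundedness of $f$, as the crux. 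The remaining steps (continuity of $\tilde f$ at $z_0$ giving the finite limit of $f$, agreement of $\tilde f$ with $f$ on the punctured disk) are routine and correctly handled.
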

\begin{thm}[Hurwitz, \cite{Taylor_2019} p. 216]\label{hurwitz}
	Suppose that $\cD$ is a connected open
	set and that $\{f_n\}_{n\in\N}$ is a sequence of nowhere vanishing holomorphic functions over $\cD$. If the sequence $\{f_n\}_{n\in\N}$ converges uniformly
	on compact subsets of $\cD$ to a (necessarily holomorphic) limit
	function $f$, then either $f$ is nowhere-vanishing over $\cD$ or $f$ is identically $0$.
\end{thm}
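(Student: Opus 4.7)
The plan is to argue by contradiction using the argument principle (alternatively Rouché's theorem), which is the standard route for this classical statement.

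First I would suppose, for contradiction, that the limit $f$ is not identically zero on $\cD$ and yet vanishes at some point $z_0\in\cD$. Since $\cD$ is connected and $f\in\cO(\cD)$ is not identically zero, the identity principle tells us that the zero set of $f$ is discrete in $\cD$. Hence I can choose $r>0$ small enough so that $\coD_r(z_0)\subset \cD$ and $f(z)\ne 0$ for $z$ on the circle $\gamma_r:=\{z\in\C\,:\,|z-z_0|=r\}$. By continuity and compactness, one obtains a lower bound $m:=\min_{z\in\gamma_r}|f(z)|>0$.

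Next I would exploit the uniform convergence of $\{f_n\}_{n\in\N}$ on the compact set $\gamma_r$: for $n$ large enough, $|f_n(z)-f(z)|<m/2$ on $\gamma_r$, so in particular $|f_n(z)|\geq m/2 > 0$ there. This places us in position to apply the argument principle. On the one hand, since $f_n$ is holomorphic and nowhere-vanishing on $\cD$ by hypothesis,
\begin{equation*}
\frac{1}{2\pi\im}\oint_{\gamma_r}\frac{f_n'(z)}{f_n(z)}\,\di z = 0\qquad\text{for all large }n.
\end{equation*}
On the other hand, uniform convergence $f_n\to f$ on compact subsets of $\cD$ implies uniform convergence $f_n'\to f'$ on $\gamma_r$ (e.g.\ via the Cauchy integral formula for derivatives on a slightly larger disk), and combined with the uniform bound $|f_n|\geq m/2$ on $\gamma_r$, this yields $f_n'/f_n \to f'/f$ uniformly on $\gamma_r$. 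Passing to the limit in the integral gives
\begin{equation*}
\frac{1}{2\pi\im}\oint_{\gamma_r}\frac{f'(z)}{f(z)}\,\di z = 0.
\end{equation*}
But by the argument principle applied to $f$, the left-hand side counts (with multiplicity) the zeros of $f$ inside $\cD_r(z_0)$, and is therefore at least $1$, since $z_0$ is such a zero. This is the contradiction that closes the proof.

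As an alternative to the argument-principle step one could invoke Rouché's theorem directly: on $\gamma_r$ one has $|f_n-f|<m\leq |f|$ for $n$ large, whence $f$ and $f_n=f+(f_n-f)$ have the same number of zeros inside $\cD_r(z_0)$. Since $f$ has at least one zero there and $f_n$ has none, contradiction follows. The only technical point worth highlighting is the passage from uniform convergence of $f_n$ to uniform convergence of $f_n'$, which requires shrinking the disk slightly or invoking a standard Cauchy-estimate argument; this is routine but is the only place where one must be careful rather than purely formal.
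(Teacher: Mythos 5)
Your proof is correct: the contradiction argument via the argument principle (or the Rouché variant) is the standard demonstration of Hurwitz's theorem, and you handle the only delicate point — uniform convergence of $f_n'/f_n$ on $\gamma_r$ via Cauchy estimates and the lower bound $|f_n|\geq m/2$ — properly. Note, however, that the paper itself does not prove this statement; it records Hurwitz's theorem in its appendix of classical complex-analysis tools and simply cites \cite{Taylor_2019}, p.~216, so there is no paper proof to compare against. Your argument is a complete and correct rendition of the textbook proof.
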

\section{Tools of algebraic geometry and applications}
\subsection{On the dependence of the roots of a polynomial on its coefficients}

It is a standard fact in the study of algebra that the roots of a monic complex polynomial of one variable depend continuously on its coefficients. The same result holds true for non-monic polynomials once one takes solutions at infinity into account by the means of the compact identification of $\C\cup\{\infty\}$ with the Riemann sphere. Without entering into too many details, we state the following result, whose proof can be found in \cite{Cucker_Gonzalez-Corbalan_1989}.
\begin{thm}\label{continuous_dependence}
	Let $P(w)=a_n w^n+a_{n-1}w^{n-1}+...+a_0$ be a non-zero complex polynomial of degree $k\le n$. Let $\xi_1,...,\xi_r$ be its roots in $\C$ with $m_1,...,m_r$ their respective multiplicities. Fix $\eps>0$ small enough and denote with  $\cD_\eps(\xi_1),...,\cD_\eps(\xi_r)$ the disjoint disks of radius $\eps$ centered at $\xi_1,...,\xi_r$, such that $\cD_\eps(\xi_j)\subset\cD_{1/\eps}(0)$ for all $j\in\{1,...,r\}$. Then, there exists $\delta(\eps)>0$ such that every complex polynomial $Q(w)=b_n w^n+b_{n-1} w^{n-1}+...+b_0$ satisfying $|b_j-a_j|<\delta(\eps)$ for all $j\in\{ 0,...,n\}$ has $m_i$ roots (counted with multiplicity) in each $\cD_\eps(\xi_i)$ for $i\in\{1,...,r\}$ and $\deg(Q)-k$ roots belonging to the complementary of $\cD_{1/\eps}(0)$.
\end{thm}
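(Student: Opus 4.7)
The plan is to combine Rouché's theorem on small circles around each finite root of $P$ with a degree-counting argument to locate the remaining roots of $Q$ at large modulus. I would proceed in three steps.

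First, I establish a uniform positive lower bound for $|P|$ on the compact set
$$K := \overline{\cD_{1/\eps}(0)} \setminus \bigcup_{i=1}^r \cD_\eps(\xi_i)$$
and on each boundary circle $\partial \cD_\eps(\xi_i)$. Since $P$ vanishes only at the points $\xi_i$, which lie in the interior of the open disks (themselves pairwise disjoint and contained in $\cD_{1/\eps}(0)$ by the hypothesis that $\eps$ is small enough), the quantity
$$c := \min\Big\{\, \min_K |P|,\; \min_{1\le i\le r}\min_{w\in\partial \cD_\eps(\xi_i)} |P(w)| \,\Big\}$$
is strictly positive.

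Second, I choose $\delta(\eps)$ so that the perturbation $Q - P$ is uniformly smaller than $c$ on $\overline{\cD_{1/\eps}(0)}$. For $w$ in this closed disk and any $Q$ satisfying $|b_j - a_j| < \delta$ for every $j \in \{0, \ldots, n\}$, one has
$$|Q(w) - P(w)| \le \sum_{j=0}^n |b_j - a_j|\,|w|^j \le \delta\,(n+1)\,\max(1, \eps^{-n}),$$
so picking $\delta(\eps) := c / \bigl(2(n+1)\max(1, \eps^{-n})\bigr)$ yields $|Q - P| < c \le |P|$ on $K$ and on each $\partial \cD_\eps(\xi_i)$. Rouché's theorem applied on each circle $\partial \cD_\eps(\xi_i)$ then forces $P$ and $Q = P + (Q-P)$ to have the same number of zeros in $\cD_\eps(\xi_i)$, namely $m_i$ counted with multiplicity; and the same strict inequality on $K$ prevents $Q$ from vanishing there.

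Third, a degree count finishes the argument: $Q$ has exactly $\deg(Q)$ zeros in $\C$ counted with multiplicity. Of these, $\sum_i m_i = k$ lie in the union of the small disks $\bigcup_i \cD_\eps(\xi_i)$, while by the previous step none lie in $K$, so the remaining $\deg(Q) - k$ zeros must lie in $\C \setminus \overline{\cD_{1/\eps}(0)}$. The only subtle point is ensuring that $\delta$ depends only on $\eps$ (and on the fixed data $P, n$), not on the particular perturbation $Q$; this is granted precisely by the uniform bounds in the first two steps, so the main technical work really reduces to choosing the explicit numerical constant $\delta(\eps)$. An alternative route — substituting $z = 1/w$ and applying Rouché near $z = 0$ to $\tilde P(z) = z^n P(1/z)$, which has a zero of multiplicity $n-k$ at the origin — would directly account for the ``roots at infinity'', but the degree-counting approach above is more self-contained.
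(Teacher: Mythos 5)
Your proof is correct. The paper itself does not prove Theorem~\ref{continuous_dependence}; it simply cites \cite{Cucker_Gonzalez-Corbalan_1989} for the argument. Your three-step Rouché argument is the standard and most self-contained route: the uniform lower bound $c>0$ on $K$ and on the circles $\partial\cD_\eps(\xi_i)$ exists by compactness (since $P$ vanishes only at the $\xi_i$, which lie strictly inside the small disks), and the explicit choice $\delta(\eps)=c/\bigl(2(n+1)\max(1,\eps^{-n})\bigr)$ indeed forces $|Q-P|<c\le|P|$ on that whole region, so Rouché pins down exactly $m_i$ zeros of $Q$ in each $\cD_\eps(\xi_i)$ and excludes any in $K$; moreover $|Q|\ge|P|-|Q-P|>c/2>0$ on the circles shows $Q\not\equiv 0$, so the degree count is legitimate and the remaining $\deg(Q)-k$ zeros fall outside $\overline{\cD_{1/\eps}(0)}$. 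The alternative you mention (inversion $z=1/w$, Rouché near $z=0$ on $\tilde P(z)=z^n P(1/z)$) is equally valid and makes the ``roots escaping to infinity'' picture explicit, but your degree-counting version avoids any discussion of the behaviour at infinity and is preferable for a self-contained appendix.
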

This theorem formalizes the intuitive idea that, if one takes a polynomial 
$$Q(w)=a_n w^n+a_{n-1}, w^{n-1}+...+a_0,\ {\rm with}\ a_n\neq 0$$
 and makes $a_{k+1},a_{k+2},...,a_n$ tend to zero while $a_k\neq 0$, then $n-k$ solutions "continuously go to infinity" and $k$ solutions, counted with their multiplicities, "stay finite".

\subsection{Application to sequences of algebraic functions}
	
\begin{lemma}\label{sequence}
	Take an open bounded set $\mathtt U\subset \C$, let $\{g_n\}_{ n\in{\mathbb N}}$ be a sequence of algebraic functions in $\cO (\mathtt U )$ associated to polynomials of degree $k\in \N$ and converging in $\mathtt U$ to a function $g\in \cO(\mathtt U)$. Then $g$ is an algebraic function. Moreover, there exists a sequence of polynomials $\{Q_n\in \Pol\}_{n\in\N}$ solving the graphs of the functions in $\{g_n\}_{ n\in{\mathbb N}}$ which converges to a polynomial $Q\in \C[z,w]$ that solves $\text{graph}(g)$ everywhere in $\mathtt U$.  
	
\end{lemma}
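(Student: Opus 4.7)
\textbf{Proof plan for Lemma \ref{sequence}.} The plan is to exploit the finite-dimensionality of $\Pol$ through a normalization-and-compactness argument, then pass to the limit in the defining equation by combining polynomial convergence with the locally uniform convergence $g_n\to g$ in $\cO(\mathtt U)$.

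First, for each $n\in\N$, the hypothesis provides a non-zero polynomial $P_n\in\Pol$ such that $P_n(z,g_n(z))\equiv 0$ on $\mathtt U$. Since $P_n$ is determined only up to multiplication by a non-zero scalar, fix any norm $\|\cdot\|$ on the finite-dimensional space $\Pol\cong\C^m$ (with $m=(k+1)(k+2)/2$) and normalize $Q_n:=P_n/\|P_n\|$, so that the sequence $\{Q_n\}_{n\in\N}$ lies in the compact unit sphere of $\Pol$. Hence, up to extracting a subsequence (and the corresponding subsequence of $\{g_n\}$, which still converges to $g$), one may assume $Q_n\to Q\in\Pol$ with $\|Q\|=1$; in particular $Q\not\equiv 0$ and $\deg Q\le k$.

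The next step is to pass to the limit in the identity $Q_n(z,g_n(z))=0$. Fix any compact set $K\subset\mathtt U$. Locally uniform convergence of $\{g_n\}$ on $\mathtt U$ guarantees the existence of $M>0$ such that $|g_n(z)|\le M$ and $|g(z)|\le M$ for all $z\in K$ and all $n$ large enough. In the finite-dimensional space $\Pol$, the convergence $Q_n\to Q$ amounts to coefficient-wise convergence and is therefore uniform on every compact subset of $\C^2$; applied to the compact box $K\times\coD_M(0)$, this gives
$$
\sup_{(z,w)\in K\times\coD_M(0)}\bigl|Q_n(z,w)-Q(z,w)\bigr|\xrightarrow[n\to\infty]{}0.
$$
Combining this bound with the uniform convergence $g_n\to g$ on $K$ via the triangle inequality yields $Q_n(z,g_n(z))\to Q(z,g(z))$ uniformly on $K$. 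Since the left-hand side is identically zero for every $n$, we conclude that $Q(z,g(z))=0$ on $K$, and hence on all of $\mathtt U$ by arbitrariness of $K$. This simultaneously shows that $g$ is algebraic (associated to the non-zero polynomial $Q$ of degree $\le k$) and furnishes the required converging sequence $\{Q_n\}$.

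The mild but essential technical point is the joint passage to the limit inside $Q_n(z,g_n(z))$: one must couple the coefficient-wise convergence of the $Q_n$ in the finite-dimensional space $\Pol$ with the locally uniform convergence of the $g_n$ in $\cO(\mathtt U)$. The link is provided by the local boundedness of $\{g_n\}$ inherited from the locally uniform convergence, which confines the points $(z,g_n(z))$ to a fixed compact subset of $\C^2$ on which the polynomial convergence $Q_n\to Q$ is uniform. Normalization (i.e., passing to the unit sphere of $\Pol$) is what prevents the limit $Q$ from degenerating to zero and hence losing all information about the graph of $g$.
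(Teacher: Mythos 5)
Your proof is correct and the overall skeleton is the same as the paper's: normalize each associated polynomial to the unit sphere of the finite-dimensional space $\Pol$, extract a convergent subsequence $Q_{n_j}\to Q$ with $\|Q\|=1$ (so $Q\not\equiv 0$), and pass to the limit to obtain $Q(z,g(z))=0$. Where you diverge is in the final passage to the limit. The paper fixes a point $z^*\in\mathtt U$, views $Q_{n_j}^{z^*}(w):=Q_{n_j}(z^*,w)$ as one-variable polynomials in $w$ converging to $Q^{z^*}$, and then invokes Theorem \ref{continuous_dependence} (continuous dependence of roots on coefficients, which also tracks roots escaping to infinity) to conclude that the roots $g_{n_j}(z^*)$ of $Q_{n_j}^{z^*}$ must accumulate on roots of $Q^{z^*}$, hence $g(z^*)$ is a root of $Q^{z^*}$. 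You instead pass to the limit \emph{directly} in the identity $Q_{n}(z,g_{n}(z))=0$, using that coefficient-wise convergence in $\Pol$ implies uniform convergence of $Q_n\to Q$ on compact boxes $K\times\coD_M(0)$, and that local boundedness of $\{g_n\}$ confines $(z,g_n(z))$ to such a box. This is more elementary: it bypasses the machinery of root continuity entirely and reduces to joint continuity of the evaluation map $(\text{coefficients},z,w)\mapsto Q(z,w)$. Note that your formulation asks only for pointwise convergence of $g_n\to g$; that would already suffice for your argument at each fixed $z$, so the local-uniformity hypothesis is not truly needed for this step, though it is of course available in all the applications within the paper. One small shared caveat (present in the paper's proof as well): both arguments produce a \emph{subsequence} $Q_{n_j}$ converging, whereas the statement nominally asks for a full sequence indexed by $n$; this is harmless for how the lemma is used, but if you want to match the statement literally you should say so explicitly or observe that $\{Q_n\}$ itself already has all subsequential limits annihilating $\mathrm{graph}(g)$.
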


\begin{proof}

For any $n\in\N$, the graph of the function $g_{n}$ satisfies $S_{n}(z,g_{n}(z))=0$ for some polynomial $S_{n}\in\cP(k)\backslash\{ 0\}$ and the equation $S_{n}(z,g_{n}(z))=0$ is invariant when $S_{n}$ is multiplied by any non-zero constant. Without loss of generality, one can choose an arbitrary norm $||\cdot||$ in $\Pol\simeq \C^m$, with $m=(k+1)(k+2)/2$, and consider the sequence of polynomials $\{Q_n\}_{ n\in{\mathbb N}}$ associated to $\{g_n\}_{ n\in{\mathbb N}}$ by defining, for any $n\in\N$:
\begin{equation}\label{Q}
Q_{n}(z,w):=\frac{S_{n}(z,w)}{||S_{n}||}\ {\rm \ with}\ Q_{n}(z, g_{n}(z))=0\ {\rm \ and}\ Q_{n}\in\mathbb S^m
\end{equation}
 where $\mathbb S^m$ denotes the unitary sphere in $\Pol\simeq \C^m$. By compacity of ${\mathbb S}^m$, there exists a subsequence $\{Q_{n_{j}}\}_{j\in\N}$ converging to a polynomial $Q\in\mathbb S^m$. Moreover, if we denote $Q_{n_{j}}^z(w):=Q_{n_{j}}(z,w)$ and $Q^z(w):=Q(z,w)$ - hence $Q_{n_{j}}^z$ and $Q^z$ belong to $\cQ(k)$ for any fixed $z\in \C$ - we have the convergence
\begin{equation}\label{uniforme_polinomi}
	\lim_{j\longrightarrow+\infty }||Q^{z^*}_{n_{j}}-Q^{z^*}\|=0
\end{equation}
for any fixed $z^*\in{\mathtt U}$ and with respect to any norm in $\cQ(k)$.

 The convergence in \eqref{uniforme_polinomi} and Theorem \ref{continuous_dependence} imply that the sequence $\{g_{n_{j}}(z^*)\}_{j\in \N}$ approaches a root of $Q^{z^*}$ for any $z^*\in{\mathtt U}$. Since $g_{n_{j}}$ converges over $\mathtt U$ to $g$, then $g(z)$ is a solution of $Q^z(w)=0$ for any $z\in\mathtt U$.
  
Finally, since $g\in\cO(\mathtt U)$, then $g$ is a Riemann branch of $Q\in\cP (k)$ over $\mathtt U$. 
\end{proof}
\subsection{Non-existence of essential singularities for algebraic functions}

\begin{prop}\label{essential}
	An algebraic function $f$ cannot have any essential singularity.
\end{prop}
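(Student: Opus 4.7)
The strategy is to argue by contradiction, combining the Casorati--Weierstrass Theorem \ref{casorati} with the continuity of the roots of a polynomial with respect to its coefficients (Theorem \ref{continuous_dependence}). Assume that $f$ is algebraic with defining polynomial $S\in\C[z,w]$ of degree $k$, so that $S(z,f(z))=0$, and that $z_0$ is an isolated essential singularity of $f$, with $f\in\cO(\cD_t(z_0)\setminus\{z_0\})$ for some $t>0$.

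The first step is a reduction. Writing $S(z,w)=(z-z_0)^m\, T(z,w)$ with $m\ge 0$ maximal so that $T(z_0,w)\not\equiv 0$ as a polynomial in $w$, the identity $S(z,f(z))=0$ implies $T(z,f(z))=0$ for every $z\in\cD_t(z_0)\setminus\{z_0\}$. Replacing $S$ by $T$, we may therefore assume that $S^{z_0}(w):=S(z_0,w)$ is a non-zero polynomial of degree $\ell\le k$; let $w_1,\dots ,w_r$ be its (finitely many) roots.

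The second, key, step is a confinement argument. For any $\varepsilon>0$, Theorem \ref{continuous_dependence} applied to the family $z\mapsto S^z(\cdot)\in\cQ(k)$ (which depends continuously on $z$ since its coefficients are polynomials in $z$) yields $\delta>0$ such that, for $|z-z_0|<\delta$, every root of $S^z$ lies in $\bigcup_{i=1}^{r}\cD_\varepsilon(w_i)\cup \bigl(\C\setminus\coD_{1/\varepsilon}(0)\bigr)$. In particular, since $f(z)$ is always a root of $S^z$, one has
$$
f(z)\in \bigcup_{i=1}^{r}\cD_\varepsilon(w_i)\,\cup\,\bigl(\C\setminus\coD_{1/\varepsilon}(0)\bigr)\qquad \text{for every }z\text{ with }0<|z-z_0|<\delta.
$$

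The final step is to contradict this confinement. Pick any $w^*\in\C$ distinct from $w_1,\dots ,w_r$, and then choose $\varepsilon$ small enough so that $|w^*-w_i|>\varepsilon$ for every $i$ and $|w^*|<1/\varepsilon-1$; hence a whole neighbourhood of $w^*$ is disjoint from the set above. By Casorati--Weierstrass, the image of $\cD_\delta(z_0)\setminus\{z_0\}$ under $f$ is dense in $\C$, so there exists $z^\star$ arbitrarily close to $z_0$ with $f(z^\star)$ arbitrarily close to $w^*$, which contradicts the confinement. Therefore $z_0$ cannot be an essential singularity.

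The only delicate point is the preliminary reduction ensuring $S^{z_0}\not\equiv 0$; once this is done, the combination of root continuity with Casorati--Weierstrass is straightforward. No other difficulty is anticipated.
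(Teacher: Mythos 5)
Your proof is correct, and it reaches the conclusion by a genuinely different route from the paper's. The paper applies Casorati--Weierstrass directly to conclude that for \emph{every} $w_0\in\C$ one has $\overline{\cS}(z_0,w_0)=\lim_k \overline{\cS}(z_k,f(z_k))=0$, so $\overline{\cS}^{z_0}\equiv 0$; this forces $(z-z_0)$ to divide $\overline{\cS}$, which is impossible since the decomposition \eqref{decomposition} collects all pure-$z$ factors into $q$. You instead perform the analogous local reduction yourself --- factoring out the maximal power of $(z-z_0)$ so that $T^{z_0}\not\equiv 0$ --- and then invoke the root-continuity Theorem \ref{continuous_dependence} to \emph{confine} the values $f(z)$ for $z$ near $z_0$ inside small disks around the finitely many roots of $T^{z_0}$ together with the exterior of a large disk; Casorati--Weierstrass then contradicts this confinement. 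The two arguments are dual in spirit: the paper shows the defining polynomial must degenerate at $z_0$, while you show that its non-degeneracy after the reduction forbids density of the image. Your version is somewhat more self-contained, since it does not rely on the global decomposition \eqref{decomposition} nor on the notion of excluded points, only on a local factorization; the trade-off is that it needs the quantitative root-continuity theorem, which the paper's own proof of this proposition bypasses. One small remark: you assume at the outset that $z_0$ is an \emph{isolated} singularity, whereas the paper derives isolation from the finiteness of $\cN_{\overline{\cS}}$. Since ``essential singularity'' is ordinarily defined only for isolated singularities this is harmless, but if one wished to establish that singularities of algebraic functions are automatically isolated, one would still need Lemma \ref{Riemann leaves}, which your argument does not reproduce.
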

\begin{proof}
	By Lemma \ref{Riemann leaves} and decomposition \ref{decomposition}, the singularities of $f$ are included in the set $\cN_{\overline \cS}$ of excluded points (see \ref{esclusi}). Hence, suppose by absurd that $z_0\in\cN_{\overline \cS}$ is an essential singularity. Since the cardinality of $\cN_{\overline \cS}$ is finite, $z_0$ is isolated. Then, Casorati-Weierstrass Theorem (see Th. \ref{casorati}) holds and, for any fixed $w_0\in\C$, one can build a sequence $\{z_k\}_{k\in\N}$ converging to $z_0$ and such that
	$$
	\lim_{k\longrightarrow+\infty}f(z_k)=w_0\ .
	$$
	But $w_0$ is also a root of the one variable polynomial $\overline \cS^{z_0}(w):=\overline \cS(z_0,w)$ since $f(z)$ is a Riemann branch of $\overline \cS$ in a punctured neighborhood centered at $z_0$ and
	$$
	\overline \cS^{z_0}(w_0):=\overline \cS(z_0,w_0)=\lim_{k\longrightarrow+\infty} \overline \cS(z_k,f(z_k))=0.
	$$
	
	This construction holds for any $w_0\in\C$ and the polynomial $\overline \cS^{z_0}$ is null but, necessarily, $(z-z_0)$ is a factor of $\overline{\cS}$ and this is not possible with decomposition \ref{decomposition}. 
	Hence, $f(z)$ cannot have an essential singularity at $z_0$.	
\end{proof}

\medskip

\section*{Acknowledgements}
The authors are extremely grateful to L. Biasco and J.P. Marco for the useful conversations on the subject as well as for the suggestions about the bibliography.

\bibliographystyle{plain}
\bibliography{SteepNied}
\end{document}